\documentclass[12pt]{amsart}

\usepackage[utf8]{inputenc}
\usepackage{amsfonts}
\usepackage{amsthm}
\usepackage{amssymb}
\usepackage{amsmath}
\usepackage{amscd}
\usepackage{latexsym,dsfont}
\usepackage{bbm}
\usepackage{mathrsfs}

\usepackage{times}
\usepackage{microtype}
\usepackage{setspace}
\usepackage[margin=1.2in]{geometry}

\usepackage{cite}

\usepackage[colorlinks=true, pdfstartview=FitV, linkcolor=blue,
citecolor=blue, urlcolor=blue]{hyperref}

\newtheorem{theorem}{Theorem}[section]

\theoremstyle{definition}

\theoremstyle{remark}
\newtheorem{remark}[theorem]{Remark}

\numberwithin{equation}{section}
 \allowdisplaybreaks

\def\Xint#1{\mathchoice
   {\XXint\displaystyle\textstyle{#1}}%
   {\XXint\textstyle\scriptstyle{#1}}%
   {\XXint\scriptstyle\scriptscriptstyle{#1}}%
   {\XXint\scriptscriptstyle\scriptscriptstyle{#1}}%
   \!\int}
\def\XXint#1#2#3{{\setbox0=\hbox{$#1{#2#3}{\int}$}
     \vcenter{\hbox{$#2#3$}}\kern-.5\wd0}}

\def\avgint{\Xint-}

\DeclareMathOperator{\Div}{div}
\DeclareMathOperator{\supp}{supp}
\DeclareMathOperator{\dist}{dist}

\newcommand{\op}{{\mathrm{op}}}

\DeclareMathOperator{\diag}{diag}
\DeclareMathOperator{\lip}{\mathrm{Lip}}
\newcommand{\R}{\mathbb{R}}

\newcommand{\loc}{\mathrm{loc}}

\newcommand{\F}{\mathcal F}

\newcommand{\vecb}{\mathbf b}
\newcommand{\vecc}{\mathbf c}
\newcommand{\vece}{\mathbf e}
\newcommand{\vecf}{\mathbf f}
\newcommand{\vecg}{\mathbf g}

\address{David Cruz-Uribe, OFS \\
Dept. of Mathematics \\
University of Alabama \\
 Tuscaloosa, AL 35487, USA}

\email{dcruzuribe@ua.edu}

\address{Feyza Elif Dal \\
Dept. of Mathematics \\
Y\i ld\i z Technical University \\
Esenler, Istanbul, 34220 Davutpasa, Turkey
}
\email{feyzadal@hotmail.com}

\address{Scott Rodney\\
Dept. of Mathematics, Physics and Geology \\ 
Cape Breton University \\
Sydney, NS B1Y3V3, CA} 

\email{scott\_rodney@cbu.ca}

\thanks{
The first author is partially supported by a Simons Foundation
  Travel Support for Mathematicians Grant and by NSF Grant DMS-2349550. The second author is supported by the TUBITAK 2211-E Domestic Direct Doctorate Scholarship Program and 2214-A International Research Fellowship Programme for PhD Students. The third  author  is partially supported by an NSERC development grant.  This project is supported by  TUBITAK, the Scientific and Technological Research Council of T\"urkiye through a 2501 Joint Research Program grant 223N112.}

\keywords{degenerate elliptic equations, Sobolev inequalities, Poincar\'e inequalities, extrapolation}

\subjclass[2010]{35A23, 35J70, 46E35}

\title[Degenerate Sobolev and Poincar\'e inequalities]
{Degenerate Sobolev and Poincar\'e inequalities \\ via extrapolation}

\author[Cruz-Uribe, Dal, Rodney] {David Cruz-Uribe, Feyza Elif Dal, Scott Rodney}
\begin{document}

\begin{abstract}
In this paper we prove matrix weighted Sobolev and Poincar\'e inequalities using techniques derived from the theory of Rubio de Francia extrapolation.  Given weights $w,\,v$ and a symmetric non-negative definite matrix valued function $Q$ defined on a connected open subset $\Omega$ of $\R^n$ that satisfies the lower ellipticity condition
\[  w(x)^p \leq |\sqrt{Q(x)}\xi|^p,\quad \xi\in \R^n, \]
we give Lebesgue integrability conditions on the weights $w,v$ that ensure there exists $\tau\geq 1$ so that Sobolev and Poincar\'e inequalities of the form
\[\bigg(\int_\Omega |u|^{\tau p} \,vdx\bigg)^{\frac{1}{\tau
      p}}
  \leq C(v,w)  \bigg(\int_\Omega |\sqrt{Q}\nabla u|^p\,dx\bigg)^{\frac{1}{ p}},\textrm{ and}\]
  \[\bigg(\int_\Omega |u-\langle u\rangle_{\Omega,v}|^{\tau p} \,v dx\bigg)^\frac{1}{\tau p} \\
    \leq C(v,w)\bigg(\int_\Omega|\sqrt{Q}\nabla u|^p \, dx\bigg)^{\frac{1}{p}}\]
    hold for smooth $u$.
    We explore these and related results in the context of several examples that include John domains, the Heisenberg group, and CR manifolds.
\end{abstract}
\maketitle

\section{Introduction}
\label{sec:introduction}

The purpose of this paper is to prove degenerate Sobolev and
Poincar\'e inequalities, e.g., inequalities of the form
\[ \bigg(\int_\Omega |u|^{\tau p} \,vdx\bigg)^{\frac{1}{\tau
      p}}
  \leq C(v,w)  \bigg(\int_\Omega |\sqrt{Q}\nabla
  u|^p\,dx\bigg)^{\frac{1}{ p}}, \]
where $v,\,w$ are weights and $Q$ is an $n\times n$, symmetric, positive semidefinite
matrix function that satisfies the degenerate lower ellipticity condition
\begin{equation*} 
   w(x)|\xi|^p\leq |\sqrt{Q(x)}\xi|^p, \quad \xi
   \in \R^n, \quad \text{a.e. } x\in \Omega.
   \end{equation*}
Our results are very general, and we can prove inequalities assuming
only integrability conditions on $v$ and $w^{-1}$.  Moreover, our
method of proof is novel (at least in this area):  we use techniques
developed in the study of Rubio de Francia extrapolation in harmonic
analysis (cf.~\cite{DCU-Martell-Perez,DCU}) that let us ``lift'' the
classical Sobolev and Poincar\'e inequalities and prove degenerate matrix weighted
inequalities.

The motivation for the study of degenerate Sobolev and Poincar\'e
inequalities comes from the study of degenerate elliptic partial
differential equations.  For context we give some of the history.
Fabes, Kenig, and Serapioni~\cite{Fabes-Kenig-Serapioni} studied the
Dirichlet problem for the operator $Lu=v^{-1}\Div(Q\nabla u)$ where
$Q$  satisfies
\begin{equation*}
    \lambda v(x)|\xi|^2\leq \langle Q(x)\xi,\xi \rangle\leq \Lambda
    v(x)|\xi|^2,
    \quad \xi
  \in \R^n, \quad \text{a.e. } x\in \Omega,
\end{equation*}
with  $\lambda$ and $\Lambda$ constants, and $v$ an element of the
Muckenhoupt class $A_2$, i.e,
\[ \sup_B \avgint_B v\,dx \avgint_B v^{-1}\,dx < \infty. \]
For $1<p<\infty$, using the properties of $A_p$ weights and the theory of weighted norm
inequalities, they proved the existence of a local weighted Sobolev
inequality:  given any
ball $B=B(x_0,r) \subset \R^n$,  and a weight $v\in A_p$, there exists $\delta>0$ such that
\begin{equation*}
  \bigg(\frac{1}{v(B)}\int_B |u|^{\sigma p} \,v dx\bigg)^{\frac{1}{\sigma p}}
  \leq Cr\bigg(\frac{1}{v(B)}\int_B |\nabla u|^p \,v dx\bigg)^{\frac{1}{p}},
\end{equation*}
where $1<\sigma<\frac{n}{n-1}+\delta$.  (Note that
$\sigma<\frac{n}{n-p}$, the gain in the classical $L^p$ Sobolev inequality.)
They also proved a local weighted Poincar\'e inequality:  
\begin{equation*}
    \bigg(\frac{1}{v(B)}\int_B |u-\langle u\rangle_{B,v}|^{\sigma p}
    \,v dx\bigg)^{\frac{1}{\sigma p}}
    \leq Cr\bigg(\frac{1}{v(B)}\int_B|\nabla u|^p \,v dx\bigg)^{\frac{1}{p}}
  \end{equation*}
where $\langle u\rangle_{B,v}=\frac{1}{v(B)}\int_B u\,v dx$.  Using
these inequalities (with $p=2$) they extended the regularity theory for
uniformly elliptic operators to these degenerate elliptic operators.

\medskip

Chanillo and Wheeden~\cite{Chanillo-Wheeden,Chanillo-Wheeden2}  studied the same
operator, but assumed that the matrix $Q$ satisfied the two-weight
ellipticity condition
\begin{equation} \label{eqn:degen-ellipticity-intro}
w(x)|\xi|^2 \leq |\sqrt{Q(x)}\xi|^2 \leq v(x)|\xi|^2. 
\end{equation}
For this, they needed two-weight
local Sobolev and Poincar\'e inequalities.
Given $1<p<\infty$, a pair of weights $v,\,w$ is said to be
$p$-admissible if $w\in A_p$, $v$ is doubling, that is, for $x_0\in
\R^n$ and $r>0$, 
$ v(B(x_0,2r))\leq Cv(B(x_0,r))$, and there exists $\sigma>1$, such
that 
for $0<r<s<\infty$ and $x\in \R^n$,
\begin{equation*}
  \frac{s}{r}\bigg(\frac{v(B(x,s))}{v(B(x,r))}\bigg)^{\frac{1}{\sigma p}}
  \leq C\bigg(\frac{w(B(x,s))}{w(B(x,r))}\bigg)^{\frac{1}{p}}.
\end{equation*}
With these assumptions they proved the following inequalities:  for $r>0$ and $x_0\in \R^n$, 
\begin{equation*}
    \bigg(\frac{1}{v(B)}\int_B |u|^{\sigma p}
      \,vdx\bigg)^{\frac{1}{\sigma p}}
      \leq Cr\bigg(\frac{1}{w(B)}\int_B |\nabla u|^p\,wdx\bigg)^{\frac{1}{p}},
    \end{equation*}
    and
    \begin{equation*}
    \bigg(\frac{1}{v(B)}\int_B |u-\langle u\rangle_{B,v}|^{\sigma p}
    \,v dx\bigg)^{\frac{1}{\sigma p}}
    \leq Cr\bigg(\frac{1}{w(B)}\int_B|\nabla u|^p \,w dx\bigg)^{\frac{1}{p}}.
  \end{equation*}
Their results were further generalized by Franchi, Lu, and
Wheeden~\cite{Franchi-Lu-Wheeden}.  

  \medskip
  
Korobenko, Rios, Sawyer, and Shen~\cite{Korobenko-Rios-Sawyer-Shen2}
studied elliptic differential equations in the infinitely degenerate
regime:  that is, they assumed $Q$
satisfied~\eqref{eqn:degen-ellipticity-intro} with $v=1$ and $w$ is a
continuous function
that decays to $0$ more quickly than any polynomial.  In this setting
they proved the local $(1,1)$ Sobolev and Poincar\'e inequalities
\begin{gather*}
\int_{B(0,r)}|u|\,dx\leq
   Cr\int_{B(0,2r)}|\sqrt{Q}\nabla u|\,dx, \\
   \int_{B(0,r)}|u-\langle u \rangle_B|\,dx\leq
   Cr\int_{B(0,2r)}|\sqrt{Q}\nabla u|\,dx;
 \end{gather*}
 here the balls are defined with respect to the Carnot-Carath\'eodory metric. 
We note that to prove these results they imposed a number of very
technical structural conditions on $w$ and $Q$; we refer the reader to
their paper for details.

In the same paper they also considered Sobolev inequalities where the
``gain'' on the left-hand side is in an Orlicz space, rather than the
usual Lebesgue space.  The reason for this is that in the infinitely
degenerate regime, balls in the associated Carnot-Caratheodory metric are
non-doubling, and Korobenko, Maldonado, and
Rios~\cite{Korobenko-Maldonado-Rios} proved that the existence of a
Sobolev inequality with gain in the scale of Lebesgue spaces is
(essentially) equivalent to balls in the Carnot-Caratheodory metric being
doubling.  
As a replacement, they proved an Orlicz-Sobolev inequality of the form
\begin{equation*}
    \|u\|_{L^\Phi(\Omega)}\leq C\|\sqrt{Q}\nabla u\|_{L^1(\Omega)},
  \end{equation*}
  where $\Phi(t) \approx t\log(e+t)^\sigma$, $\sigma>1$.   Again, this
  inequality required a number of very specific assumptions on $w$,
  $Q$, and $\sigma$ and we refer the reader to their paper for
  details. 

% In \cite{Korobenko-Rios-Sawyer-Shen}, Korobenko, Rios, Sawyer and Shen studied on boundedness of solutions to infinitely degenerate elliptic equations. Let $\mathcal{Q}(x,u)$ be a matrix function, $Q$ be degenerate elliptic matrix, $\phi_0:\Omega\rightarrow\R$ and $ \vec\phi_1:\Omega\rightarrow\R^n$ be locally integrable data functions. For any $u\in \lip_0(\Theta)$, quasilinear infinitely degenerate elliptic equations of the form is
% \begin{equation}\label{quasi eqn}
%     -\Div \mathcal{Q}(x,u)=\phi_0-\Div_Q\vec\phi_1
% \end{equation}
% and it is locally bounded.
% They defined an Orlicz-Sobolev bump inequality if $\Phi$ is a Young function, $\R^n$ has certain non-doubling metric and $\Omega$ be a bounded domain in $\R^n$ such that  
% \begin{equation*} \Phi^{(-1)}\bigg(\int_\Theta\Phi(u)d\mu\bigg)\leq u(\rho)\|\sqrt{Q}\nabla u\|_{L^1(\Theta)},
% \end{equation*}
% where the function $u(r)$ is continuous, nondecreasing and satisfies $u(0)=0, u(\rho)\ge \rho$ for all $0\leq\rho\leq r$ then there exists a weak solution to equation \eqref{quasi eqn}. They proved if a Young function $\Phi$ is $K$-submultiplicative, then 
% \begin{equation*}
%     \Phi^{(-1)}\bigg(\int_\Theta\Phi(u)\,d\mu\bigg)\leq K \|u\|_{L^\Phi(\Theta)}
% \end{equation*}
% for some constant $K\ge1$.

  \medskip
  
  A different approach to studying degenerate elliptic equations was
  first proposed by Sawyer and
  Wheeden~\cite{Sawyer-Wheeden2,Sawyer-Wheeden} (motivated by earlier
  work of Guti\'errez and Lanconelli~\cite{MR2015404} and others), and
  then extensively developed by the third author and his
  collaborators~\cite{SR,Monticelli-SR,MR3388872}.  (This approach was
  also used in the first half of~\cite{Korobenko-Rios-Sawyer-Shen2}.)
  Essentially, their approach was to {\em assume} that the matrix $Q$
  satisfied
\[ w(x)|\xi|^2 \leq \langle Q(x)\xi,\xi\rangle \leq 1, \]
and that the global  Sobolev inequality
\[ \bigg(\int_\Omega |u|^{2\sigma } \,dx\bigg)^{\frac{1}{2\sigma}}
  \leq C  \bigg(\int_\Omega |\sqrt{Q}\nabla
  u|^2\,dx\bigg)^{\frac{1}{ 2}}, \]
held for some $\sigma>1$.  With these assumptions they studied the existence and
regularity of solutions of the corresponding degenerate elliptic
equations.  They also considered equations with lower order terms, and, with the assumption of $L^p$ inequalities, 
the corresponding degenerate $p$-Laplacian $L_pu=
-v^{-1}\Div(|\sqrt{Q}\nabla u|^{p-2}Q\nabla u)$.

The first and third authors and their collaborators have extended this
approach further.  The three authors of this paper, with \c{C}etin and
Zeren~\cite{SC-DCU-FED-SR-Zeren}, considered the existence and
uniqueness of solutions of a very general second order elliptic
equation with lower order terms.  Key to their results are three assumptions.  First, they assumed the matrix $Q$ satisfies the ellipticity condition~\eqref{eqn:degen-ellipticity-intro}.
Second, they assumed the existence of a
global  Sobolev inequality
\begin{equation} \label{eqn:gen-sobolev-intro}
    \bigg(\int_\Omega |u|^{2\sigma }\,vdx\bigg)^{\frac{1}{2\sigma }}
    \leq C\bigg(\int_\Omega |\sqrt{Q}\nabla u|^2\,dx\bigg)^{\frac{1}{2}}
  \end{equation}
for some $\sigma\geq 1$, or the existence of a global Sobolev inequality with gain in the scale of Orlicz spaces,
\begin{equation} \label{eqn:gen-sobolev-orlicz-intro}
\|u\|_{L^\Phi(v,\Omega)} \leq C\bigg(\int_\Omega |\sqrt{Q}\nabla u|^2\,dx\bigg)^{\frac{1}{2}}, 
\end{equation}
where $\Phi$ is a Young function as $\Phi(t)=t^2\log(e+t)^\tau$, $\tau>1$.  Third, they 
assumed a weak Poincar\'e inequality:  given any compact set $K\subset \Omega$ and any $\epsilon>0$,  there exists $\delta>0$ so that if $0<r<\delta$ and $x\in K$, then for every ball $B=B(x,r)$, 
\begin{equation} \label{eqn:weak-poincare-intro}
\int_B |u-\langle u \rangle_{B,v}|^2\, vdx 
\leq \epsilon \int_B |\sqrt{Q}\nabla u|^2\,dx.
\end{equation}
Such an inequality holds if, for example, there exists $\nu>0$,
\begin{equation} \label{eqn:alt-poincare-intro}
\int_B |u-\langle u \rangle_{B,v}|^2\, vdx 
\leq Cr^\nu \int_B |\sqrt{Q}\nabla u|^2\,dx.
\end{equation}
With these assumptions, they showed that if the coefficients of the first order terms satisfied integrability conditions related to the gain in the Sobolev inequality, unique solutions existed.  

Finally, the first and third authors and Macdonald ~\cite{DCU_Macdonald_SR}
considered the boundedness and exponential
integrability of the solutions of
\begin{equation*}
    -v^{-1}\Div(|\sqrt{Q}\nabla u|^{p-2}Q\nabla u)=f|f|^{p-2}.
  \end{equation*}
  where $Q$ satisfied the ellipticity condition~\eqref{eqn:degen-ellipticity-intro}.
  They showed that if they assumed a degenerate Sobolev inequality of
  the form~\eqref{eqn:gen-sobolev-intro}  or an inequality of the form~\eqref{eqn:gen-sobolev-orlicz-intro} (in both cases again with the power 2 replaced by $p$), then solutions
  were bounded, with the integrability of $f$ depending on $p$ and
  $\sigma$.   If they assumed a Sobolev inequality without gain (i.e., $\sigma=1$), they showed solutions were exponentially integrable.

 \medskip

Given this history, we now turn to  the goal of this paper: to prove
degenerate Sobolev and Poincar\'e inequalities of the kinds assumed in
the approach developed by Sawyer and Wheeden.  We will do so assuming
only integrability conditions on $v$ and $w^{-1}$; thus, our inequalities 
 require much weaker assumptions that those of Fabes, Kenig, and
Serapioni~\cite{Fabes-Kenig-Serapioni} and Chanillo and
Wheeden~\cite{Chanillo-Wheeden2,Chanillo-Wheeden}.

\begin{remark}
We note that our results cannot be applied to prove Sobolev and Poincar\'e inequalities like those proved by Korobenko, {\em et al.}.   Because they are working in the infinitely degenerate regime, $w^{-1}$ is not in any $L^p$ space, $p>0$.  This obstruction seems intrinsic to our proofs; it is an open question whether our techniques can be further extended to prove inequalities similar to theirs.    
\end{remark}

The remainder of this paper is organized as follows.  In
Section~\ref{section:State. Main Results} we give an abstract
extrapolation result.  Following the approach to Rubio de Francia
extrapolation developed in~\cite{DCU-Martell-Perez}, it is stated for
an abstract family of pairs of functions referred to as extrapolation pairs.  It is
this machinery that will let us prove a variety of Sobolev and
Poincar\'e inequalities as immediate corollaries.  We also prove two
variants of this result; the first, Theorem~\ref{Main Thm 3}, lets us
extrapolate from a ``weak'' Sobolev inequality, and the second,
Theorem~\ref{Orlicz case}, lets us prove Sobolev and Poincar\'e
inequalities in the scale of Orlicz spaces.  For simplicity we state and prove these results in Euclidean space with Lebesgue measure, but it will be clear that they hold in any measure space.

In Section~\ref{section:prelim} we define the weighted Lebesgue
spaces, Orlicz spaces, and Sobolev spaces that appear in our results.
We also gather together some basic results about Orlicz spaces that we
will need in the proof of Theorem~\ref{Orlicz case}.   For the
degenerate Sobolev spaces, we follow the development and notation used
in~\cite{SC-DCU-FED-SR-Zeren}, and we refer the reader there for
further information.  

In Section~\ref{section:Applications} we first use our extrapolation
theorem to give a degenerate global Sobolev inequality,
Theorem~\ref{global sob.}.  We also give a special case of
Theorem~\ref{global sob.}, Theorem~\ref{classical Sob.}.  This result
was first proved in~\cite{DCU-FED-SR-Zeren}; we include it here to
show how it fits in our broader framework.  We then give an
Orlicz-Sobolev inequality, Theorem~\ref{Orlicz case cor}.  Next, we
give a local Poincar\'e inequality on balls, Theorems~\ref{local
  Poin.}, and a global Poincar\'e inequality on $s$-John domains,
Theorem~\ref{s-John}.  We also show how our results can be extended to
other geometries, and we give degenerate Folland-Stein type
inequalities on the Heisenberg group, Theorem~\ref{Heisenberg Group}, and for compact manifolds,
Theorems~\ref{riemannian} and~\ref{Heis. Gr. Cor. 2}.

We emphasize to the reader that the degenerate Sobolev and Poincar\'e
inequalities in our results are meant to be illustrative, and not
exhaustive.  They were chosen to demonstrate the flexibility of our
extrapolation theorems and we believe that these will find
applications to prove other kinds of inequalities.  In particular, the Folland-Stein type inequalities should be thought of as model results in other geometries.

In Section~\ref{section:pdes} we briefly consider the application of
our results to the study of partial differential equations by showing
how we can construct weights $v,\,w$ and matrices $Q$ so that the corresponding Sobolev and Poincar\'e inequalities satisfy the hypotheses of the existence results
in~\cite{SC-DCU-FED-SR-Zeren} and the boundedness results
in~\cite{DCU_Macdonald_SR} discussed above.
Finally, in Sections~\ref{section:proof-extrapol}
and~\ref{section:Proof of App.} we prove the results in
Sections~\ref{section:State. Main Results}
and~\ref{section:Applications}.

Throughout this paper we will use the following notation.
The constant $n\geq 2$ will denote the dimension of the underlying space
$\R^n$; $\Omega\subset \R^n$ will be an open, connected set.  The
matrix $Q$ will be an $n\times n$, measurable, symmetric, positive
semidefinite matrix.  We define $\sqrt{Q}$ in the standard way; see \cite[Section 2]{SC-DCU-FED-SR-Zeren}.   By a
weight we mean a non-negative, locally integrable function.
Constants will be denoted by $C$, $c$, etc. and may change at each
appearance.  They can depend on the underlying parameters, but not on
the weights $v,\,w$ or the matrix $Q$; we will explicitly track the dependence on these quantities.

\section{The extrapolation theorems}
\label{section:State. Main Results}

In this section, we state our extrapolation theorems.  The underlying
philosophy of each result is straightforward:  given an unweighted
inequality (e.g., the classical Sobolev inequality) show that it can
be lifted to a degenerate (that is, a weighted) inequality.  Our main tool in the proofs will
be H\"older's inequality, and the hypotheses are precisely those
needed to apply it.

So that our results can be applied to prove a wide variety of
inequalities in different contexts, we adopt the following convention from
the theory of Rubio de Francia extrapolation.  We define an abstract
family $\F$ to consist of pairs $(f,F)$ of measurable functions, with 
 $f:\Omega\rightarrow [0,\infty]$  a real-valued measurable function
 and $F:\Omega\rightarrow \R^n$  a vector-valued measurable function.
Given function spaces $X$ and $Y$, when we write an inequality of the
form
\begin{equation} \label{eqn:extrapol-conv}
     \|f\|_X\ \leq c_0 \|F\|_Y,\qquad (f,F) \in \mathcal{F},
   \end{equation}
 we mean that this inequality holds for all pairs $(f,F)\in
 \mathcal{F}$ such that $\|f\|_X<\infty$.  A similar convention holds for Theorem~\ref{Main Thm 3} below with an additional term on the right-hand side.

 \begin{remark}  \label{remark:two-step}
   As we will see in Section~\ref{section:Applications}, to apply our
 extrapolation results, we need to construct the family $\F$ so that
 \eqref{eqn:extrapol-conv} holds for both the ``base case'' (an unweighted Sobolev or Poincar\'e inequality)
  and for the desired degenerate  inequality.   The
   inequalities we are proving hold for functions that may not be in
   the family $\F$; thus, each proof will require an approximation
   argument.  This is intrinsic to this approach, and occurs in Rubio
   de Francia extrapolation as well. 
 \end{remark}
 
 \medskip

We now state our results.  In each extrapolation theorem, given a
Lebesgue space $L^{\frac{a}{b}}(\Omega)$, if $b=0$, then we interpret
this as $L^\infty(\Omega)$.  We adopt the analogous convention for
Orlicz space norms.

\begin{theorem}\label{extrapolation}
  Given $1\leq q< \infty$ and $\sigma \ge1$, suppose that the family
  of extrapolation pairs $\F$ satisfies the inequality
\begin{align}\label{Ineq with sigma q in gen.}
    \bigg(\int_\Omega |f|^{\sigma q} \, dx\bigg)^\frac{1}{\sigma q} \leq c_0\bigg(\int_\Omega \big |F \big|^q\, dx\bigg)^\frac{1}{q}, \qquad (f,F) \in \mathcal{F},   
   \end{align}
   where the constant $c_0>0$ may depend on $\F$ but is uniform for
   each pair $(f,F)\in \F$.  Let $1\leq p <\infty$ and $1\leq \tau
   \leq \sigma$ be such that
   \begin{equation}\label{prop for 4.1}
       \frac{\tau p}{\sigma} \leq q \leq p.
     \end{equation}
   Given weights $v,\,w$, if $v\in L^{\frac{\sigma q}{\sigma q-\tau p}}(\Omega)\, $, $
   \,w^{-1} \in L^{\frac{q}{p-q}}(\Omega)$, and the matrix $Q$
   satisfies the lower ellipticity condition
    \begin{equation} \label{ellptcy for gain}
w(x)|\xi|^p\leq |\sqrt{Q(x)}\xi|^p, \quad \xi \in \R^n, \quad
\text{a.e. } x \in \Omega,
\end{equation}
then
\begin{multline}\label{Ineq for gen.}
  \bigg(\int_\Omega |f|^{\tau p} \,v dx\bigg)^\frac{1}{\tau p} \\
  \leq c_0\|w^{-1}\|^{\frac{1}{p}}_{L^{\frac{q}{p-q}}(\Omega)}
  \|v\|^{\frac{1}{\tau p }}_{L^{\frac{\sigma q}{\sigma
        q-\tau p}}(\Omega)}
  \bigg(\int_\Omega|\sqrt{Q}F|^p \, dx\bigg)^{\frac{1}{p}},
  \qquad (f,F) \in \mathcal{F}.
\end{multline}
\end{theorem}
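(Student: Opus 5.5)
The argument will be a two-step application of Hölder's inequality, one on each side of \eqref{Ineq with sigma q in gen.}, with no other input. Fix $(f,F)\in\F$ for which the left-hand side of \eqref{Ineq for gen.} is finite. Following the convention \eqref{eqn:extrapol-conv}, we may also assume $\|f\|_{L^{\sigma q}(\Omega)}<\infty$, so that the hypothesis \eqref{Ineq with sigma q in gen.} applies to this pair. (If one only knows $\|f\|_{L^{\tau p}(v)}<\infty$, this is exactly the point that Remark~\ref{remark:two-step} defers to the construction of $\F$ in the applications.)

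\emph{Step 1: the left-hand side.} Since $\tau p\le \sigma q$ by \eqref{prop for 4.1}, apply Hölder's inequality with exponents $r=\frac{\sigma q}{\tau p}\ge 1$ and $r'=\frac{\sigma q}{\sigma q-\tau p}$:
\[
\int_\Omega |f|^{\tau p}v\,dx \le \bigg(\int_\Omega |f|^{\sigma q}dx\bigg)^{\frac{\tau p}{\sigma q}}\|v\|_{L^{\frac{\sigma q}{\sigma q-\tau p}}(\Omega)} .
\]
Taking $\tau p$-th roots gives
\[
\|f\|_{L^{\tau p}(v)}\le \|v\|_{L^{r'}}^{1/\tau p}\,\|f\|_{L^{\sigma q}}.
\]
When $\tau p=\sigma q$ we have $r'=\infty$, which matches the stated convention for $L^{a/0}$.

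\emph{Step 2: the right-hand side.} Use \eqref{Ineq with sigma q in gen.} to bound $\|f\|_{L^{\sigma q}}$ by $c_0\|F\|_{L^q}$. Then bound $|F|^q$ using the ellipticity condition \eqref{ellptcy for gain} applied with $\xi=F(x)$, which gives
\[
|F|^q\le w^{-q/p}|\sqrt{Q}F|^q \quad\text{a.e.}
\]
At points where $w=0$, the hypothesis $w^{-1}\in L^{q/(p-q)}$ forces $w>0$ a.e. when $q<p$. When $q=p$ the required space is $L^\infty$, so $w^{-1}$ is bounded and again $w>0$. Now apply Hölder with exponents $\frac pq$ and $\frac{p}{p-q}$:
\[
\int_\Omega |F|^q dx \le \bigg(\int_\Omega |\sqrt{Q}F|^p dx\bigg)^{q/p}\bigg(\int_\Omega w^{-\frac{q}{p-q}}dx\bigg)^{\frac{p-q}{p}} .
\]
The last factor equals $\|w^{-1}\|_{L^{q/(p-q)}}^{q/p}$. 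Taking $q$-th roots yields
\[
\|F\|_{L^q}\le \|w^{-1}\|_{L^{q/(p-q)}}^{1/p}\,\|\sqrt Q F\|_{L^p},
\]
and the case $q=p$ is immediate from the $L^\infty$ bound.

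Chaining Steps 1 and 2 gives \eqref{Ineq for gen.} with exactly the stated constant. The only delicate points are checking that the exponents are admissible, which is precisely \eqref{prop for 4.1} (and $\tau\le\sigma$), and handling the finiteness convention and the endpoint cases $q=p$ and $\tau p=\sigma q$. I expect the finiteness issue to be the main, if mild, obstacle. Everything else is routine.
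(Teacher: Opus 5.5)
Your proof is correct and matches the paper's argument: Hölder with exponents $\frac{\sigma q}{\tau p}$ and $\frac{\sigma q}{\sigma q-\tau p}$, then the base inequality, then the pointwise bound $|F|^q\le w^{-q/p}|\sqrt{Q}F|^q$ with Hölder in $\frac{p}{q}$ and $\frac{p}{p-q}$, and the endpoint cases handled via $L^\infty$. Your explicit restriction to pairs with $\|f\|_{L^{\sigma q}(\Omega)}<\infty$, so that the hypothesis can actually be invoked, is a point the paper's proof passes over silently; there it is implicitly handled by the choice of $\F$ in the applications.
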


\begin{remark} \label{remark:measure-space}
  While  Theorem~\ref{extrapolation}, as well as Theorems~\ref{Main Thm 3}
  and~\ref{Orlicz case} below, are stated and proved in Euclidean space with
  Lebesgue measure, the same results are true on any measure space
  $(\Omega,\Sigma,\mu)$.  For Theorem~\ref{Orlicz case} we only need
  to make the minimal assumptions required for Orlicz spaces to be
  well-defined.  See Rao and Ren~\cite{Rao-Ren} for more information.
  We will use this fact to state and prove degenerate Folland-Stein inequalities on
  the Heisenberg group and CR manifolds.
\end{remark}

 We next consider extrapolation from a more complicated inequality which has  an additional term to the
  right-hand side.  

 \begin{theorem}\label{Main Thm 3}
   Given $1\leq q< \infty$ and $\sigma \ge1$, suppose that the family
  of extrapolation pairs $\F$ satisfies the inequality
   \begin{equation}\label{Ineq with sigma q for MT3}
    \bigg(\int_\Omega |f|^{\sigma q} \, dx\bigg)^\frac{1}{\sigma q} \leq c_0\bigg(\int_\Omega \big |F \big|^q\, dx\bigg)^\frac{1}{q}+c_1\bigg(\int_\Omega |f|^q \,dx \bigg)^{\frac{1}{q}}, \qquad (f,F) \in \mathcal{F},   
  \end{equation}
  where the constants $c_0,\,c_1 >0$ may depend on $\F$ but are
  uniform for each pair $(f,F)\in \F$.  Let $1\leq p <\infty$ and
  $1\leq \tau \leq \sigma$ be such that \eqref{prop for 4.1} holds.
 Given weights $v,\,w$,  if $v\in L^{\frac{\sigma q}{\sigma q-\tau p}}(\Omega)\,
  $and$ \,w^{-1} \in L^{\frac{q}{p-q}}(\Omega)$,  and the matrix $Q$
  satisfies the lower ellipticity condition~\eqref{ellptcy for gain},
   then
   \begin{multline}\label{Ineq for MT3}
    \bigg(\int_\Omega |f|^{\tau p} \,v dx\bigg)^\frac{1}{\tau p}
    \\
    \leq \|w^{-1}\|^{\frac{1}{p}}_{L^{\frac{q}{p-q}}(\Omega)}
    \|v\|^{\frac{1}{\tau p }}_{L^{\frac{\sigma q}{\sigma q-\tau p}}(\Omega)}
    \bigg[c_0\bigg(\int_\Omega|\sqrt{Q}F|^p \, dx\bigg)^{\frac{1}{p}}+c_1\bigg(\int_\Omega|f|^p \,w dx \bigg)^{\frac{1}{p}}\bigg], \, (f,F) \in \mathcal{F}.
  \end{multline}
 \end{theorem}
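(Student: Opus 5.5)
The plan is to repeat the two-step Hölder argument behind Theorem~\ref{extrapolation}, applying it separately to the extra term $c_1\|f\|_{L^q}$. Fix $(f,F)\in\F$ with $\int_\Omega |f|^{\tau p}v\,dx<\infty$. We may also assume the right-hand side of \eqref{Ineq for MT3} is finite, since otherwise there is nothing to prove.

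\textbf{Step 1: from $v$ to the unweighted $L^{\sigma q}$ norm.} Apply Hölder's inequality with exponents $\frac{\sigma q}{\tau p}$ and its conjugate $\frac{\sigma q}{\sigma q-\tau p}$. This is admissible because $\tau p\le\sigma q$ by \eqref{prop for 4.1}; when $\tau p=\sigma q$ the conjugate is read as $\infty$. It gives
\[
\bigg(\int_\Omega |f|^{\tau p}v\,dx\bigg)^{\frac{1}{\tau p}}\le \|v\|^{\frac{1}{\tau p}}_{L^{\frac{\sigma q}{\sigma q-\tau p}}(\Omega)}\bigg(\int_\Omega |f|^{\sigma q}dx\bigg)^{\frac{1}{\sigma q}}.
\]

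\textbf{Step 2: invoke the hypothesis.} We then apply \eqref{Ineq with sigma q for MT3} to bound $\|f\|_{L^{\sigma q}}$ by $c_0\|F\|_{L^q}+c_1\|f\|_{L^q}$. This needs $\|f\|_{L^{\sigma q}}<\infty$, because of the convention that \eqref{eqn:extrapol-conv}-type inequalities are assumed only for such pairs. This is the main obstacle, since Step 1 does not reverse. I would handle it as in the proof of Theorem~\ref{extrapolation}: either read the convention as requiring finiteness of the left side of the target inequality, or note that in the applications $\F$ consists of bounded, compactly supported functions, so the norm is finite. If needed, I would make this explicit via the approximation discussed in Remark~\ref{remark:two-step}.

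\textbf{Step 3: from $L^q$ back to the weighted norms.} Both terms are handled by Hölder with exponents $\frac pq$ and $\frac{p}{p-q}$, inserting $w^{q/p}w^{-q/p}$; when $q=p$ this is trivial with $L^\infty$.
\begin{itemize}
\item For the first term,
\[
\int_\Omega |F|^q\,dx=\int_\Omega |F|^qw^{\frac qp}w^{-\frac qp}\,dx\le\bigg(\int_\Omega |F|^pw\,dx\bigg)^{\frac qp}\|w^{-1}\|^{\frac qp}_{L^{\frac{q}{p-q}}(\Omega)}.
\]
The lower ellipticity condition \eqref{ellptcy for gain} gives $|F|^pw\le|\sqrt{Q}F|^p$, so $\|F\|_{L^q}\le \|w^{-1}\|^{1/p}_{L^{\frac{q}{p-q}}}\|\sqrt{Q}F\|_{L^p}$.
\item The identical computation with $f$ in place of $F$, but without using ellipticity, gives $\|f\|_{L^q}\le\|w^{-1}\|^{1/p}_{L^{\frac{q}{p-q}}}\big(\int_\Omega|f|^pw\,dx\big)^{1/p}$.
\end{itemize}
Combining Steps 1–3 and factoring out the common constant $\|w^{-1}\|^{1/p}\|v\|^{1/(\tau p)}$ gives \eqref{Ineq for MT3} with the stated constants.
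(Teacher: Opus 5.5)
Your argument is correct and is essentially the paper's proof. It uses H\"older in $v$ with exponents $\frac{\sigma q}{\tau p},\frac{\sigma q}{\sigma q-\tau p}$, then the hypothesis, then H\"older with exponents $\frac pq,\frac{p}{p-q}$ on both terms; the paper applies ellipticity pointwise before H\"older on the $F$ term rather than after, which is immaterial.

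The finiteness point you flag in Step~2 is genuine, and the paper's proof passes over it silently. Your first remedy does not close it, because $\int_\Omega|f|^{\tau p}v\,dx<\infty$ does not force $f\in L^{\sigma q}(\Omega)$. For the abstract statement you should instead either read the hypothesis as holding for every pair in $\F$, or restrict the conclusion to pairs with $\|f\|_{L^{\sigma q}(\Omega)}<\infty$. The latter is automatic for the $\lip_0(\Omega)$ and $C^\infty(M)$ families used in the applications.
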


\begin{remark}
    In practice, we will often assume the stronger ellipticity condition
   \begin{equation}\label{elpt con.}
     w(x)|\xi|^p\leq |\sqrt{Q(x)}\xi|^p\leq v(x)|\xi|^p,
     \quad \xi\in \R^n, \quad \text{a.e. } x\in \Omega,
     \end{equation}
in which case $w(x)\leq v(x)$ a.e., so we can take the $L^p(v,\Omega)$ norm of $f$ on the right-hand side of~\eqref{Ineq for MT3}.
\end{remark}

 \begin{remark}
   While the powers in the second term on the right-hand side of
   inequalities~\eqref{Ineq with sigma q for MT3} and~\eqref{Ineq for
     MT3} are the same as in the first term, it will be clear from the
   proof that they can be different, possibly with a different
   hypothesis on $w$.  Details are left to the interested reader.
 \end{remark}

 In our final  result, we show that we can extrapolate
 into the scale of Orlicz spaces.  For brevity, we defer the precise
 definition of these spaces to Section~\ref{section:prelim} below.

\begin{theorem}\label{Orlicz case}
  Given $1\leq q< \infty$ and $\sigma >1$, suppose that the family
  of extrapolation pairs $\F$ satisfies inequality~\eqref{Ineq with
    sigma q in gen.}, where the constant $c_0 >0$ may depend on
  $\F$ but is uniform for each pair $(f,F)\in \F$.  Fix $p$,
  $q\leq p < \sigma q$, and $\tau>0$, and define
  $A(t)=t^p\log(e+t)^\tau$. Given weights $v,\,w$, if $v\in L^\Psi(\Omega)$,
   where
  $\Psi(t)=t^{\frac{\sigma q}{\sigma
      q-p}}\log(e+t)^{\frac{\tau\sigma q}{\sigma q-p}}$, $w^{-1}\in L^{\frac{q}{p-q}}(\Omega)$, and the
  matrix $Q$ satisfies the lower ellipticity condition
  \eqref{ellptcy for gain}, then
  \begin{multline}\label{Sob. Ineq Orlicz}
    \|f\|_{L^A(v,\Omega)} \\
    \leq 4c_0\|w^{-1}\|^{\frac{1}{p}}_{L^{\frac{q}{p-q}}(\Omega)}
    \|v\|^{\frac{1}{\sigma q}}_{L^\Psi(\Omega)}
    (1+\|v\|_{L^\Psi(\Omega)})^{\frac{1}{\sigma q}}
    \bigg(\int_\Omega|\sqrt{Q}F|^p \, dx\bigg)^{\frac{1}{p}},
    \qquad (f,F) \in \mathcal{F}.
  \end{multline}
\end{theorem}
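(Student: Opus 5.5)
The plan is to use a generalized Hölder inequality in Orlicz spaces on the left-hand side. This converts the $L^A(v)$ norm of $f$ into the unweighted $L^{\sigma q}$ norm of $f$ times a norm of $v$. We then apply the hypothesis \eqref{Ineq with sigma q in gen.} and finish exactly as in Theorem~\ref{extrapolation}. That is, on the right-hand side Hölder's inequality with exponents $p/q$ and $p/(p-q)$ gives
\[ \Big(\int_\Omega |F|^q\,dx\Big)^{1/q} \le \|w^{-1}\|^{1/p}_{L^{\frac{q}{p-q}}(\Omega)} \Big(\int_\Omega |F|^p w\,dx\Big)^{1/p} \le \|w^{-1}\|^{1/p}_{L^{\frac{q}{p-q}}(\Omega)} \Big(\int_\Omega |\sqrt{Q}F|^p\,dx\Big)^{1/p}, \]
using \eqref{ellptcy for gain}. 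So the real work is the left-hand side.

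For the left-hand side, normalize by $\lambda>0$ and estimate the modular
\[ \int_\Omega A(|f|/\lambda)\,v\,dx = \int_\Omega (|f|/\lambda)^p \log(e+|f|/\lambda)^\tau v\,dx. \]
Split the integrand as $(|f|/\lambda)^p \cdot \big[\log(e+|f|/\lambda)^\tau v\big]$ and apply Hölder with exponents $s=\sigma q/p$ and $s'=\sigma q/(\sigma q-p)$; this is where $p<\sigma q$ is used. The first factor is $\lambda^{-p}\|f\|^p_{L^{\sigma q}}$. The second is $\big(\int \log(e+|f|/\lambda)^{\tau s'} v^{s'}\big)^{1/s'}$. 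To handle the mixed logarithmic term I would use the elementary inequality $\log(e+ab)\le \log(e+a)+\log(e+b)$, and then Young's inequality in the form $\log(e+t)^{\tau s'}\lesssim_\epsilon$ a small power of $t$. Alternatively, and more cleanly, I would use the standard Orlicz Hölder inequality $\|gh\|_{L^A}\le 2\|g\|_{L^{\Phi_1}}\|h\|_{L^{\Phi_2}}$ whenever $\Phi_1^{-1}(t)\Phi_2^{-1}(t)\lesssim A^{-1}(t)$, from the preliminaries. Here the right choice is $g=f v^{1/p}$ measured in a power-type norm and $v^{1/p}$ entering $\Psi$. Concretely, $\Psi^{-1}(t)\approx t^{(\sigma q-p)/(\sigma q)}\log(e+t)^{-\tau}$, and $t^{p/(\sigma q)}\cdot \Psi^{-1}(t)\approx t\log(e+t)^{-\tau}\approx A_1^{-1}(t)$, where $A_1(t)=t\log(e+t)^\tau$ is the one-homogeneous version of $A$ up to the power $p$. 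So I would apply the Orlicz Hölder inequality to $|f|^p\cdot v$ in $L^{A_1}$, with $|f|^p\in L^{\sigma q/p}$ and $v\in L^\Psi$. Then I would relate $\||f|^p v\|_{L^{A_1}}$ back to $\|f\|_{L^A(v)}$ by the scaling relation $\|f\|_{L^A(v)}^p\approx \| |f|^p\|_{L^{A_1}(v)}$, which holds because $A(t)\approx A_1(t^p)$ up to constants depending on $p$ (since $\log(e+t^p)\approx\log(e+t)$).

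The main obstacle is that the weight $v$ sits inside the measure on the left, $L^A(v)$, but must be measured in the unweighted $L^\Psi$ on the right. Moving $v$ from the measure into the function inside an Orlicz modular is not homogeneous: $\int A(|f|/\lambda)v$ is not $\int A(|f|v^{1/p}/\lambda)$, because of the logarithm. This is precisely why the final constant carries the nonhomogeneous factor $\|v\|_{L^\Psi}^{1/(\sigma q)}(1+\|v\|_{L^\Psi})^{1/(\sigma q)}$ rather than a clean power. I would handle this by splitting $\Omega$ into the sets where $v\le 1$ and $v>1$ (or equivalently normalizing $v$ by $\|v\|_{L^\Psi}$). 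On each piece, $\log(e+t)\le \log(e+tv)+\log(e+1/v)$ or its reverse is comparable up to a factor of at most $1+\|v\|$, and the generous constant $4$ absorbs the factors of $2$ from the Orlicz Hölder inequality and from the triangle inequality on the logarithm. Once the modular is bounded by $1$ with
\[ \lambda = 4c_0\|w^{-1}\|^{1/p}_{L^{\frac{q}{p-q}}(\Omega)}\|v\|_{L^\Psi}^{1/(\sigma q)}(1+\|v\|_{L^\Psi})^{1/(\sigma q)}\|\sqrt{Q}F\|_{L^p}, \]
the definition of the Luxemburg norm gives \eqref{Sob. Ineq Orlicz}.
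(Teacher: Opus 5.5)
Your right-hand-side estimate is fine. The gap is on the left, at the step where you pass from $\||f|^p v\|_{L^{A_1}(\Omega)}$, which is what the Orlicz H\"older inequality in Lebesgue measure gives you, back to $\||f|^p\|_{L^{A_1}(v,\Omega)}\approx\|f\|_{L^A(v,\Omega)}^p$.

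The two modulars differ only inside the logarithm: $\log(e+|f|^p/\mu)$ versus $\log(e+|f|^pv/\mu)$. On $\{v\ge 1\}$ the comparison you need is trivial. On $\{v<1\}$, however, your bound $\log(e+t)\le\log(e+tv)+\log(e+1/v)$ brings in $\log(e+1/v)$, which is unbounded and is not controlled by $1+\|v\|_{L^\Psi(\Omega)}$, since small values of $v$ are invisible to that norm.

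No repair of this step is possible. There is no inequality $\||f|^p\|_{L^{A_1}(v,\Omega)}\le K(\|v\|_{L^\Psi(\Omega)})\,\||f|^pv\|_{L^{A_1}(\Omega)}$. Take $|\Omega|=1$, $v\equiv\delta$ and $|f|^p\equiv M/\delta$. Then $\||f|^pv\|_{L^{A_1}(\Omega)}$ does not depend on $\delta$ and $\|v\|_{L^\Psi(\Omega)}\to 0$. On the other hand, $\int_\Omega A_1(|f|^p/\mu)\,v\,dx=(M/\mu)\log(e+M/(\delta\mu))^\tau$, which forces $\||f|^p\|_{L^{A_1}(v,\Omega)}\gtrsim M\log(1/\delta)^\tau\to\infty$ as $\delta\to0$.

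The theorem survives this example only because $\|f\|_{L^{\sigma q}(\Omega)}=(M/\delta)^{1/p}$ blows up too. Applying H\"older in $dx$ to the product $|f|^p\cdot v$ discards exactly the information about $f$ that is needed where $v$ is small. Your first, modular-based route is not carried out, and it faces the same coupling of $f$ and $v$ inside the logarithm.

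The missing idea is to keep $v$ in the measure during the Orlicz step, so that $v$ is moved out of the measure only when the function being measured is $v$ itself. The paper does this by duality, in three steps:
\begin{enumerate}
\item $\|f\|_{L^A(v,\Omega)}\le 2\int_\Omega fgv\,dx$ for some $g$ with $\|g\|_{L^{\bar A}(v,\Omega)}=1$.
\item Ordinary H\"older in $dx$ with exponents $\sigma q$ and $(\sigma q)'$ then produces $\|f\|_{L^{\sigma q}(\Omega)}$, to which the hypothesis and your right-hand-side argument apply.
\item The remaining factor $\big(\int_\Omega|g|^{(\sigma q)'}v^{1/(\sigma q-1)}\,v\,dx\big)^{1/(\sigma q)'}$ is handled by Orlicz H\"older in $L^{\bar B}(v,\Omega)\times L^{B}(v,\Omega)$ plus rescaling. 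This gives $2\|v\|^{1/(\sigma q)}_{L^\Phi(v,\Omega)}$ with $\Psi(t)=t\Phi(t)$.
\end{enumerate}
Equivalently, apply the three-function H\"older inequality in $L^A(v,\Omega)$ to $f=(fv^{-1/(\sigma q)})\,v^{1/(\sigma q)}$, using $\|fv^{-1/(\sigma q)}\|_{L^{\sigma q}(v,\Omega)}=\|f\|_{L^{\sigma q}(\Omega)}$.

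The non-homogeneous factor then comes from the exact identity $\int_\Omega\Phi(v/\lambda)\,v\,dx=\lambda\int_\Omega\Psi(v/\lambda)\,dx$ together with the Amemiya norm at $\lambda=1$. It does not come from splitting $\Omega$.

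Be aware that this last step needs $\Phi$ to be convex, i.e.\ $\sigma q\le 2p$. When $\sigma q>2p$ the stated constant is actually false. Take $p=q$, $w\equiv 1$, $Q=I$, the single pair $(1,e_1)$ with $c_0=1$, and $v\equiv c$ on a set of measure one. Then $\|1\|_{L^A(v,\Omega)}\ge c^{1/p}$, while the claimed bound is of order $c^{2/(\sigma q)}$. So no argument, yours included, can produce the factor $(1+\|v\|_{L^\Psi(\Omega)})^{1/(\sigma q)}$ in that range.
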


\begin{remark}
Since Orlicz norms represent an endpoint case, the conditions in Theorem~\ref{Orlicz case} are required to be somewhat stronger than in Theorem~\ref{extrapolation}.  Thus, while in that result we can take $\tau p=\sigma q$, here the proof requires that $p<\sigma q$ for any value of $\tau>0$.  In particular, this requires that we assume that $\sigma>1$.
    \end{remark}

\begin{remark}
  The nonlinear form of the constant involving the $L^\Phi$ norm of
  $v$ is a consequence of the proof, and comes from estimating the norm of
  $v$ in $L^\Psi(v,\Omega)$.  Such an estimate is immediate in a
  Lebesgue space, but is more complicated in an Orlicz space.
\end{remark}

\begin{remark}
    We have proved Theorem~\ref{Orlicz case} for Young functions of the form $\Phi(t)=t^p\log(e+t)^q$ since these generate the Orlicz norms that have appeared frequently in the study of degenerate PDEs.  However, the proof can be readily modified to work with other families of Young functions; for an example of such families, see~\cite{10.48550/arxiv.2210.12441}.  We leave the details to the interested reader.
\end{remark}

\begin{remark} 
  In the proof of Theorem~\ref{Orlicz case} we could replace the Orlicz
  spaces with other Banach function spaces.  For instance, we could
  use this to prove degenerate Sobolev and Poincar\'e inequalities in
  variable Lebesgue spaces.  Such results have been considered
  recently in several papers:
  see~\cite{DCU-Penrod_SR,MR5020484,MR4641614,Almeida:2026vi}.  Details of
  such extensions are left to the interested reader.
\end{remark}

\section{Preliminaries on function spaces}
\label{section:prelim}

In this section we give some definitions and basic results about the
function spaces that appear in our results.   For the results on
weighted Lebesgue and Sobolev spaces, see~\cite{SC-DCU-FED-SR-Zeren};
for Orlicz spaces, see~\cite{Rao-Ren}.

\subsection*{Lebesgue spaces and Lipschitz spaces}
Given a weight $v$ and $1\leq p<\infty$, $L^p(v,\Omega)$ is the
Banach space of measurable functions defined on $\Omega$ with norm
\begin{equation*}
    \|f\|_{L^p(v,\Omega)}=\bigg(\int_\Omega |f|^p  \,v dx\bigg)^{1/p}<\infty.
  \end{equation*}
  %
%$L^p(v,\Omega)$ is a Banach space with respect to this norm. 
Similarly, given a matrix $Q$, the space $QL^p(\Omega)$ is the
collection of measurable functions $\vecf :\Omega\rightarrow \R^n$
such that
\begin{equation*}
\|\vecf\|_{QL^p(\Omega)}=\bigg(\int_\Omega |\sqrt{Q}\vecf|^p  \,dx\bigg)^{1/p}<\infty.
\end{equation*}
$QL^p(\Omega)$ is also a Banach space under the equivalence $\vecf=\vecg$ if
and only if $\|\vecf-\vecg\|_{QL^p(\Omega)}=0$.

The space $\lip_\loc(\Omega)$ consists of functions that are Lipschitz
on compact subsets of $\Omega$. The space $\lip_0(\Omega)$ is the
collection of Lipschitz functions which are supported on compact
subsets of $\Omega$. The space $\lip(\overline{\Omega})$ is
the collection of functions that are Lipschitz on $\overline{\Omega}$.
Note that
$\lip_0(\Omega) \subset \lip(\overline{\Omega})\subset\lip_\loc(\Omega)$.

\subsection*{Degenerate Sobolev spaces}
Given $1\leq p<\infty$, a matrix $Q$, and a weight $v$, define
$Q\lip_\loc(v,\Omega)$ to be the collection of all functions
$\varphi \in \lip_\loc(\Omega)$ such that
     \begin{equation*}
    \|\varphi\|_{QH^{1,p}(v,\Omega)}=\|\varphi\|_{L^p(v,\Omega)}+\|\nabla \varphi\|_{QL^p(\Omega)}<\infty.
  \end{equation*}
Define the space $QH^{1,p}(v,\Omega)$ to be  the abstract closure of
$Q\lip_\loc(v,\Omega)$ with respect to this norm. Formally,
$QH^{1,p}(v,\Omega)$  is the
  collection of equivalence classes of Cauchy sequences of
  $Q\lip_\loc$ functions.  However, since $L^p(v,\Omega)\times
  QL^p(\Omega)$ is a Banach space, to each such equivalence class we
  can associate a unique pair $(u,\vecg)$.  By an abuse of notation,
  we write $\nabla u = \vecg$, and refer to it as the degenerate weak
  gradient of $u$.  It is important to note that $\nabla u$ may not be
  the weak gradient of $u$ in the sense of distributional
  derivatives.  See~\cite{SC-DCU-FED-SR-Zeren} for more details.  
  
  We define two subspaces of $QH^{1,p}(v,\Omega)$.  Let 
  $Q\lip_0(v,\Omega)=\lip_0(\Omega)\cap Q\lip_\loc(v,\Omega)$ and
  define $QH_0^{1,p}(v,\Omega)$ to be the closure of $Q\lip_0(\Omega)$
  in $QH^{1,p}(v,\Omega)$.   Similarly, let
  $Q\lip(v,\overline{\Omega})= \lip(\overline{\Omega})\cap
  Q\lip_\loc(v,\Omega)$, and let $QH^{1,p}(v,\overline{\Omega})$
  be the closure of $Q\lip(v,\overline{\Omega})$ in
  $QH^{1,p}(v,\Omega)$.

  \begin{remark}
    If $v\in L^1(\Omega)$ and is the largest eigenvalue of $Q$, then
    $|Q|_\op \leq v$, and so $Q\lip_0(v,\Omega)= \lip_0(\Omega)$ and 
    $Q\lip(v,\overline{\Omega})= \lip(\overline{\Omega})$.
  \end{remark}

\subsection*{Orlicz spaces}
A Young function $\Psi:[0,\infty)\rightarrow[0,\infty)$ is a convex, continuous, and strictly increasing function such that $\Psi(0)=0$ and $\Psi(t)/t\rightarrow\infty$ as $t\rightarrow \infty$.  We will denote Young functions by the Greek letters $\Gamma, \Phi$ and $\Psi$, and also using upper case letters $A, B$, etc.

Given a Young function $\Psi$ and a weight $v$,  define the weighted Orlicz space $L^\Psi(v,\Omega)$ to  be the collection of  measurable functions $f$  defined on $\Omega$ such that 
\begin{equation}\label{Orlicz norm}
    \|f\|_{L^\Psi(v,\Omega)}=\inf\bigg\{\lambda>0:\int_\Omega\Psi\bigg(\frac{|f|}{\lambda}\, \bigg)\, vdx \leq1\bigg\}<\infty.
\end{equation}
With this norm $L^\Psi(v,\Omega)$ is a Banach space.  

If $\Psi(t)=t^p, 1\leq p<\infty$, then $L^\Psi(v,\Omega)=L^p(v,\Omega)$, with equality of norms. More generally, given two  Young functions $\Psi$ and $\Gamma$, if there exists a constant $C>0$ such that  for all $t>0$, $\Psi(t)\leq C\Gamma(t)$, then $\|f\|_{L^\Psi(v,\Omega)}\lesssim\|f\|_{L^\Gamma(v,\Omega)}$. If we also have that $c\Gamma(t) \leq \Psi(t)$ for some $c>0$, we write  $\Psi(t)\approx\Gamma(t)$.  In this case $\|f\|_{L^\Psi(v,\Omega)}\approx\|f\|_{L^\Gamma(v,\Omega)}$. The implicit constants depend $\Psi$ and $\Gamma$. 

The Orlicz norm satisfies a rescaling property: given  a young function $\Phi$, suppose that for some $s>0$, $\Gamma(t)=\Psi(t^s)$ is also a Young function.  Then it follows immediately from the definition that
\begin{equation}\label{rescaling}
    \||f|^s\|_{L^\Psi(v,\Omega)}=\|f\|^s_{L^\Gamma(v,\Omega)}.
\end{equation}

There is an equivalent expression for the Orlicz norm. Given a Young function $\Psi$ we define the Amemiya norm,
\begin{equation*}\label{Orl. Norm2}
    \|f\|^*_{L^\Psi(v, \Omega)}
    =\inf\bigg\{\lambda+\lambda\int_\Omega \Psi\bigg(\frac{|f|}{\lambda}\, \bigg)\,vdx:\lambda>0\bigg\}.
\end{equation*}
Then, for $f\in L^\Psi(v,\Omega)$,
\begin{equation}\label{Orl Norm Rel.}
\|f\|_{L^\Psi(v,\Omega)}\leq\|f\|_{L^\Psi(v,\Omega)}^*\leq 2\|f\|_{L^\Psi(v,\Omega)}.    
\end{equation}
See~\cite[Section 3.3, Theorem 13]{Rao-Ren}.

Unlike the $L^p$ norms, the Orlicz norm cannot be expressed simply in terms of an integral.  However, we do have the following inequality: given a Young function $\Psi$, if $0<\|f\|_{L^\Psi(\Omega)}\leq 1$, then
\begin{equation} \label{eqn:modular}
\int_\Omega \Psi(|f|)\,dx \leq 1. 
\end{equation}
See~\cite[Section~3.4, Proposition~6]{Rao-Ren}.

Given a Young function $\Psi$, define its complementary function $\Bar{\Psi}$ by
\begin{equation*}
    \Bar{\Psi}(t)=\sup_{r>0} \{rt-\Psi(r)\};
\end{equation*}
$\Psi$ and $\Bar{\Psi}$  satisfy the  pointwise relationship
$t\leq \Psi^{-1}(t)\Bar{\Psi}^{-1}(t)\leq 2t$.
In the proof of Theorem~\ref{Orlicz case}, we need the complementary function of a specific Young function. Using this formula, it is straightforward to show that if $\Psi(t)=t^p\log(e+t)^q, 1<p<\infty$ and $0<q<\infty$, then
\begin{equation} \label{eqn:conjugate-young}
    \Bar{\Psi}(t)\approx \frac{t^{p'}}{\log(e+t)^{q(p'-1)}}.
\end{equation}

H\"older's inequality is true in the scale of Orlicz spaces:  given a Young function $\Phi$, 
\begin{equation}\label{Hölders Ineq for Orlicz}
    \int_\Omega|fg|v \,dx\leq 2\|f\|_{L^\Phi(v,\Omega)}\|g\|_{L^{\Bar{\Phi}}(v,\Omega)}.
\end{equation}
We also have the following duality result:  given a Young function $\Phi$, define
\[ \|f\|_{L^\Phi(v,\Omega)}' = \sup\bigg\{ \int_\Omega fg\,vdx : 
\|g\|_{L^{\bar{\Phi}}(v,\Omega)}\leq 1\ \bigg\}.  \]
Then 
\begin{equation} \label{eqn:orlicz-dual}
\|f\|_{L^\Phi(v,\Omega)} \leq \|f\|_{L^\Phi(v,\Omega)}' \leq 2\|f\|_{L^\Phi(v,\Omega)}. 
\end{equation}

\section{Sobolev and Poincar\'e inequalities via extrapolation}
\label{section:Applications}

In this section we use Theorems~\ref{extrapolation}, \ref{Main Thm 3}, and~\ref{Orlicz case} to prove degenerate Sobolev inequalities and degenerate Poincar\'e inequalities.  We will also  use them to prove degenerate Folland-Stein type inequalities on the Heisenberg group and on CR manifolds.  

\subsection*{Sobolev inequalities}
We first consider degenerate Sobolev inequalities.  The starting point for our first result (the ``base case" used in Theorem~\ref{extrapolation}) is the classical Sobolev inequality (see~\cite[Theorem~7.10]{Gilbarg-Trudinger}):
\begin{equation} \label{eqn:classical-sobolev}
    \bigg(\int_\Omega |\varphi|^{\sigma q}\,dx\bigg)^{\frac{1}{\sigma q}}
    \leq S(q,\sigma)\bigg(\int_\Omega |\nabla \varphi|^q\,dx\bigg)^{\frac{1}{q}},
\end{equation}
where $\varphi \in \lip_0(\Omega)$, and for $1\leq q<n$, $\sigma = \frac{n}{n-q}$.  Moreover, if we assume that $\Omega$ is bounded, then by H\"older's inequality, for $1\leq q<n$, we can take $1\leq \sigma \leq \frac{n}{n-q}$, and for $q\geq n$, we can take $1\leq \sigma < \infty$.  

Our first result is a degenerate global Sobolev inequality; the hypotheses are somewhat complicated as we have to distinguish between bounded and unbounded domains in order to use inequality~\eqref{eqn:classical-sobolev}.  For applications of this result, see Section~\ref{section:pdes} below.

\begin{theorem}\label{global sob.}
    Given a domain $\Omega$, we consider three cases:
    \begin{enumerate}
        \item If $\Omega$ is unbounded, fix any $p$, $1 \leq p<n$.  Choose $q$, $1\leq q\leq p<n$, let $\sigma=\frac{n}{n-q}$, and choose $\tau$, so that $1\leq \tau\leq\frac{\sigma q}{p}$.

        \item If $\Omega$ is bounded,  fix any $p$, $1 \leq p<n$.  Choose $q$, $1\leq q\leq p<n$, choose $\sigma$, $1\leq \sigma \leq \frac{n}{n-q}$, and choose $\tau$, so that $1\leq \tau\leq\frac{\sigma q}{p}$.
        
        \item If $\Omega$ is bounded,  fix any $p$, $n\leq p<\infty$.  Choose $\tau$, $\tau\ge1$, choose $q$, $n\leq q\leq p$ and choose $\sigma$, $\sigma\ge1$ so that $1\leq \tau\leq \frac{\sigma q}{p}$.
    \end{enumerate}
    Given weights $v$ and $w$, if $v\in L^{\frac{\sigma q}{\sigma q-\tau p}}(\Omega)$ and $w^{-1} \in L^{\frac{q}{p-q}}(\Omega)$, and the matrix $Q$ satisfies the lower ellipticity condition~\eqref{ellptcy for gain}, 
    then for any $u\in QH_0^{1,p}(v,\Omega)$,
\begin{equation}\label{Ineq for u}
    \bigg(\int_\Omega |u|^{\tau p} \,v dx\bigg)^\frac{1}{\tau p}
    \leq S(q,\sigma)\|w^{-1}\|^{\frac{1}{p}}_{L^{\frac{q}{p-q}}(\Omega)}
    \|v\|^{\frac{1}{\tau p }}_{L^{\frac{\sigma q}{\sigma q-\tau p}}(\Omega)}
    \bigg(\int_\Omega|\sqrt{Q}\nabla u|^p \, dx\bigg)^{\frac{1}{p}},
\end{equation} 
where $S(q,\sigma)$ is the constant in~\eqref{eqn:classical-sobolev}.
\end{theorem}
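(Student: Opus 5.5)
The plan is to apply Theorem~\ref{extrapolation} to the family
\[ \F=\{(|\varphi|,\nabla\varphi) : \varphi\in Q\lip_0(v,\Omega)\}, \]
and then pass to $QH_0^{1,p}(v,\Omega)$ by density. The first step is to check the base inequality~\eqref{Ineq with sigma q in gen.} with $c_0=S(q,\sigma)$. Every $\varphi\in\lip_0(\Omega)$ satisfies~\eqref{eqn:classical-sobolev} for the stated ranges of $q$ and $\sigma$:
\begin{itemize}
\item in case (1), $\sigma=n/(n-q)$ and no boundedness of $\Omega$ is needed;
\item in cases (2) and (3), $\Omega$ is bounded and the range of $\sigma$ is the one allowed by H\"older's inequality.
\end{itemize}
The parameter conditions~\eqref{prop for 4.1} hold by construction: $\tau\le \sigma q/p$ gives $\tau p/\sigma\le q$, and $q\le p$ is assumed. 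The constraint $\tau\le\sigma$ also follows from $\tau\le \sigma q/p$ and $q\le p$. Since the integrability hypotheses on $v$ and $w^{-1}$ and the ellipticity condition~\eqref{ellptcy for gain} are assumed, Theorem~\ref{extrapolation} yields~\eqref{Ineq for u} for every $\varphi\in Q\lip_0(v,\Omega)$.

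One technical point is the convention that~\eqref{eqn:extrapol-conv} is required only for pairs with $\|f\|_X<\infty$, and this applies to both the base case and the target. For $\varphi\in\lip_0(\Omega)$, $\varphi$ is bounded with compact support, so $\|\varphi\|_{L^{\sigma q}}<\infty$. For the target inequality, the pair already lies in $Q\lip_0(v,\Omega)$. If finiteness of $\int|\varphi|^{\tau p}v$ is needed, it follows from H\"older's inequality, because $v\in L^{\sigma q/(\sigma q-\tau p)}$ and $\varphi\in L^\infty$ with compact support. The endpoint $\tau p=\sigma q$ means $v\in L^\infty$, and the argument still works in that case.

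Next I extend the inequality to $u\in QH_0^{1,p}(v,\Omega)$. I take $\varphi_k\in Q\lip_0(v,\Omega)$ with $\varphi_k\to u$ in $L^p(v,\Omega)$ and $\nabla\varphi_k\to\nabla u$ in $QL^p(\Omega)$. The right-hand side of~\eqref{Ineq for u} converges because $\|\sqrt{Q}\nabla\varphi_k\|_{L^p}\to\|\sqrt Q\nabla u\|_{L^p}$.

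The left-hand side is the main obstacle, because convergence is given only in $L^p(v)$ and not in $L^{\tau p}(v)$. I would pass to a subsequence converging $v$-a.e. to $u$ on $\{v>0\}$. Fatou's lemma then gives
\[ \int|u|^{\tau p}v \le \liminf_k\int|\varphi_k|^{\tau p}v, \]
and the right side is bounded by the limit of the right-hand sides. Alternatively, applying the inequality to $\varphi_k-\varphi_j$ shows the sequence is Cauchy in $L^{\tau p}(v)$, and its limit must agree with $u$ $v$-a.e. Either route gives~\eqref{Ineq for u} with the constant $S(q,\sigma)\|w^{-1}\|^{1/p}\|v\|^{1/(\tau p)}$.
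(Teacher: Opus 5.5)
Your proposal is correct and follows the paper's own proof. You apply Theorem~\ref{extrapolation} to pairs coming from compactly supported Lipschitz functions, checking finiteness of $\int|\varphi|^{\tau p}v\,dx$ from the compact support and the integrability of $v$, and then pass to $QH_0^{1,p}(v,\Omega)$ through a $v$-a.e.\ convergent subsequence and Fatou's lemma. Your use of $|\varphi|$ and $Q\lip_0(v,\Omega)$ in place of $\lip_0(\Omega)$, and your alternative Cauchy-sequence argument in $L^{\tau p}(v,\Omega)$, are harmless variants.
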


\begin{remark}
Note that the range of possible values for $q$ is restricted by the choice  of $p$ and the value of $\sigma$.  For instance, if $\sigma=\frac{n}{n-q}$, then to insure that $\frac{q\sigma}{p}\geq 1$ (so that a value of $\tau\geq 1$ exists), we must have that $q\geq \frac{pn}{n+p}$.  This lower bound is greater than $1$ if $p>\frac{n}{n-1}$. For smaller values of $\sigma$, $q$ must be chosen even closer to $p$ in value.  The same phenomenon occurs in the other Sobolev and Poincar\'e inequalities we state below.
\end{remark}

The following result was proved in~\cite[Theorem 1.1]{DCU-FED-SR-Zeren} and was the motivation for this paper.  It is a special case of Theorem~\ref{global sob.}; we include it here to show how it fits into our general framework.

\begin{theorem}\label{classical Sob.}
Given a domain $\Omega$ and  $1<p<\infty$, let $v\in L^n(\Omega)$ be a weight and let $Q$ be a matrix function that satisfies the lower ellipticity condition 
     \begin{equation}\label{elp.cond.}
    \frac{1}{v(x)}|\xi|^{p'}\leq |\sqrt{Q(x)}\xi|^{p'}, \xi \in \R^n \text{ and a.e. } x\in \Omega.
\end{equation}
   Then for any $u\in QH_0^{1,p}(v,\Omega)$,
   \begin{equation}\label{deg. Sob.}
        \bigg(\int_\Omega |u|^{p} \,v dx\bigg)^\frac{1}{p}\leq S(q,\sigma)\|v\|_{L^{n}(\Omega)}\bigg(\int_\Omega|\sqrt{Q}\nabla u|^p \, dx\bigg)^{\frac{1}{p}},
   \end{equation}
where $S(q,\sigma)$ is the constant in~\eqref{eqn:classical-sobolev}, $q=\frac{np}{n+p-1}$, and $\sigma=\frac{n}{n-q}$. 
    \end{theorem}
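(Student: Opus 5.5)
We will obtain Theorem~\ref{classical Sob.} as the special case $\tau=1$ of Theorem~\ref{global sob.}, with the choices of $w$, $q$ and $\sigma$ dictated by \eqref{elp.cond.}. Raising both sides of \eqref{elp.cond.} to the power $p/p'=p-1$ turns it into the lower ellipticity condition \eqref{ellptcy for gain} with $w=v^{-(p-1)}$, so $w^{-1}=v^{p-1}$. We take $q=\frac{np}{n+p-1}$ and $\sigma=\frac{n}{n-q}$, as in the statement.

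The first step is bookkeeping of exponents.
\begin{itemize}
\item Since $q\geq 1$ is equivalent to $(n-1)(p-1)\geq 0$, we have $1\leq q$. Since $q\leq p$ is equivalent to $p\geq 1$, and $q<n$ is equivalent to $np<n^2+np-n$, we also have $q\leq p$ and $q<n$.
\item $p-q=\frac{p(p-1)}{n+p-1}$, so $\frac{q}{p-q}=\frac{n}{p-1}$. Hence $w^{-1}=v^{p-1}\in L^{\frac{n}{p-1}}(\Omega)$ exactly when $v\in L^n(\Omega)$, and
\[ \|w^{-1}\|^{\frac1p}_{L^{\frac{q}{p-q}}(\Omega)}=\|v\|_{L^n(\Omega)}^{\frac{p-1}{p}}. \]
\item $n-q=\frac{n(n-1)}{n+p-1}$, so $\sigma q=\frac{np}{n-1}$. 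Then $\tau=1\leq \frac{\sigma q}{p}=\frac{n}{n-1}$, so \eqref{prop for 4.1} holds.
\item With $\tau=1$, $\frac{\sigma q}{\sigma q-p}=n$. The second weight factor is therefore $\|v\|_{L^n(\Omega)}^{\frac1p}$.
\end{itemize}
Multiplying the two weight factors gives exactly $\|v\|_{L^n(\Omega)}$, which is the constant in \eqref{deg. Sob.}.

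The only subtlety is that Theorem~\ref{global sob.} as stated restricts $p<n$ when $\Omega$ is unbounded, whereas Theorem~\ref{classical Sob.} allows every $1<p<\infty$. The point is that the base inequality is only ever used at the exponent $q$, and $q<n$ for every $p$. So \eqref{eqn:classical-sobolev} holds with $\sigma=\frac{n}{n-q}$ and constant $S(q,\sigma)$ for all $\varphi\in\lip_0(\Omega)$, on an arbitrary domain.

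We therefore repeat the argument behind Theorem~\ref{global sob.} directly rather than citing its case split.
\begin{enumerate}
\item Let $\F$ consist of the pairs $(|\varphi|,\nabla\varphi)$ with $\varphi\in Q\lip_0(v,\Omega)$. Then \eqref{Ineq with sigma q in gen.} holds with $c_0=S(q,\sigma)$.
\item Apply Theorem~\ref{extrapolation}. This gives \eqref{deg. Sob.} for every $\varphi\in Q\lip_0(v,\Omega)$ with $\int_\Omega|\varphi|^pv\,dx<\infty$, and that finiteness holds by the definition of $Q\lip_0(v,\Omega)$.
\end{enumerate}

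The remaining step, which I expect to be the main (though routine) obstacle, is the approximation described in Remark~\ref{remark:two-step}. Given $u\in QH_0^{1,p}(v,\Omega)$, choose $\varphi_k\in Q\lip_0(v,\Omega)$ with $\varphi_k\to u$ in $L^p(v,\Omega)$ and $\nabla\varphi_k\to\nabla u$ in $QL^p(\Omega)$. Both sides of \eqref{deg. Sob.} are continuous under these two convergences, because the left side is the $L^p(v,\Omega)$ norm and the right side is the $QL^p(\Omega)$ norm of the gradient. Passing to the limit in the inequality for $\varphi_k$ then gives \eqref{deg. Sob.} for $u$.
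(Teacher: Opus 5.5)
Your proposal is correct and is essentially the paper's proof. You set $w=v^{-(p-1)}$, verify $\frac{q}{p-q}=\frac{n}{p-1}$ and $\frac{\sigma q}{\sigma q-p}=n$ with $\tau=1$, and apply Theorem~\ref{extrapolation} directly, as the paper also does, to pairs built from compactly supported Lipschitz functions. You then approximate; your direct norm-continuity limit is a legitimate simplification of the paper's Fatou argument because $\tau=1$.

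One small imprecision: Theorem~\ref{global sob.} cannot be cited for $p\ge n$ on any domain, not only on unbounded ones, since case (3) requires $q\ge n$ while here $q<n$. Because you rerun the argument directly, this costs nothing.
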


Finally, we give sufficient conditions for a degenerate Sobolev inequality with gain in the scale of Orlicz spaces to hold.  For simplicity we only consider the case of bounded domains.  For unbounded domains a similar result holds, with hypotheses analagous to those in Theorem~\ref{global sob.}.  Details are left to the interested reader.

\begin{theorem}\label{Orlicz case cor}
Given a bounded domain $\Omega$ and $1\leq p<n$, take any $q$, $1\leq q\leq p$, take any $\sigma$, $1< \sigma \leq \frac{n}{n-q}$, so that $p< \sigma q$.  Fix  any  $\tau >0$ and define  $A(t)=t^p\log(e+t)^\tau$ and $\Psi(t)=t^\frac{\sigma q}{\sigma q-p}\log(e+t)^{\frac{\tau \sigma q}{\sigma q-p}}$.  Given weights $v$ and $w$, if $v\in L^\Psi(\Omega)$ and $w^{-1}\in L^{\frac{q}{p-q}}(\Omega)$, and the matrix $Q$ satisfies the lower ellipticity condition \eqref{ellptcy for gain}, then for any $u\in QH^{1,p}_0(v,\Omega)$,
    \begin{equation}\label{Orlicz ineq.}
        \|u\|_{L^A(v,\Omega)}\leq 4S(q,\sigma)\|w^{-1}\|^{\frac{1}{p}}_{L^{\frac{q}{p-q}}(\Omega)}\|v\|^{\frac{1}{\sigma q}}_{L^\Psi(\Omega)}(1+\|v\|_{L^\Psi(\Omega)})^{\frac{1}{\sigma q}}\bigg(\int_\Omega|\sqrt{Q}\nabla u|^p \, dx \bigg)^{\frac{1}{p}}.
    \end{equation}
\end{theorem}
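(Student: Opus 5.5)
The plan is to apply Theorem~\ref{Orlicz case} with an extrapolation family built from Lipschitz functions of compact support, and then pass to general $u\in QH^{1,p}_0(v,\Omega)$ by approximation, following the two-step scheme of Remark~\ref{remark:two-step}.

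\textbf{The family.} Let $\F$ consist of all pairs $(|\varphi|,\nabla\varphi)$ with $\varphi\in Q\lip_0(v,\Omega)$. Since $\Omega$ is bounded, $1\leq q<n$ and $1<\sigma\leq\frac{n}{n-q}$, the classical inequality~\eqref{eqn:classical-sobolev} holds for every $\varphi\in\lip_0(\Omega)$ with constant $S(q,\sigma)$. Hence $\F$ satisfies~\eqref{Ineq with sigma q in gen.} with $c_0=S(q,\sigma)$. The remaining hypotheses of Theorem~\ref{Orlicz case} are exactly those assumed here: $q\leq p<\sigma q$, $\tau>0$, $v\in L^\Psi(\Omega)$, $w^{-1}\in L^{\frac{q}{p-q}}(\Omega)$, and~\eqref{ellptcy for gain}. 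Theorem~\ref{Orlicz case} therefore gives~\eqref{Orlicz ineq.} for every $\varphi\in Q\lip_0(v,\Omega)$, provided $\|\varphi\|_{L^A(v,\Omega)}<\infty$, as the convention~\eqref{eqn:extrapol-conv} requires.

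\textbf{Finiteness of the left side.} I would check that $\|\varphi\|_{L^A(v,\Omega)}<\infty$ for such $\varphi$. Since $\varphi$ is bounded with compact support, $A(|\varphi|/\lambda)\leq C_\lambda$, so it suffices that $v\in L^1(\Omega)$. This holds because $\Psi(t)\geq t$ for $t\geq 1$, which gives $L^\Psi(\Omega)\subset L^1(\Omega)$ on the bounded set $\Omega$.

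\textbf{Approximation.} Given $u\in QH^{1,p}_0(v,\Omega)$, choose $\varphi_k\in Q\lip_0(v,\Omega)$ with $\varphi_k\to u$ in $L^p(v,\Omega)$ and $\nabla\varphi_k\to\nabla u$ in $QL^p(\Omega)$.
\begin{itemize}
\item Applying the inequality to the differences $\varphi_k-\varphi_j$ shows that $(\varphi_k)$ is Cauchy in the Banach space $L^A(v,\Omega)$, so it converges there to some limit $g$.
\item Since $A(t)\gtrsim t^p$, $L^A$ convergence implies $L^p(v)$ convergence, so $g=u$ almost everywhere on $\{v>0\}$.
\item Alternatively, pass to a subsequence converging $v$-a.e.\ and use the Fatou property of the Orlicz norm.
\end{itemize}
Either route yields $\|u\|_{L^A(v,\Omega)}\leq\liminf_k\|\varphi_k\|_{L^A(v,\Omega)}$, while the right-hand sides converge to $\big(\int_\Omega|\sqrt{Q}\nabla u|^p\,dx\big)^{1/p}$.

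\textbf{Main obstacle.} The only real issue is this limiting step. One has to identify the $L^A(v)$ limit with $u$, which is defined only as an element of the abstract completion, and use the lower semicontinuity of the Luxemburg norm. Both follow from Fatou's lemma applied to the modular $\int_\Omega A(|\varphi_k|/\lambda)\,v\,dx$ along an a.e.-convergent subsequence.
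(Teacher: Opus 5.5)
Your proposal is correct and follows the paper's proof: apply Theorem~\ref{Orlicz case} with $c_0=S(q,\sigma)$ to the family built from compactly supported Lipschitz functions, then extend to $QH^{1,p}_0(v,\Omega)$ using Fatou's lemma for Orlicz norms along a $v$-a.e.\ convergent subsequence. Your finiteness check via $L^\Psi(\Omega)\subset L^1(\Omega)$ is appropriate, and your alternative route (the Cauchy property in $L^A(v,\Omega)$ together with $A(t)\geq t^p$) is also valid.
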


\medskip

\subsection*{Poincar\'e inequalities}
We now consider degenerate Poincar\'e inequalities.   Suppose first that the domain $\Omega$ is a ball $B=B(x_0,r)$, $x_0\in \R^n$, $r>0$.  In this case we have that if $1\leq q<n$ and $1\leq \sigma \leq \frac{n}{n-q}$, or if $q\geq n$ and $1\leq \sigma <\infty$, then for all $\varphi \in \lip(\overline{B})$,
\begin{equation} \label{eqn:poincare-ball} 
\bigg(\frac{1}{|B|} \int_B |\varphi-\langle \varphi \rangle_B|^{\sigma q} \,dx \bigg)^{\frac{1}{\sigma q}}
\leq  P(q,\sigma) r\bigg(\frac{1}{|B|} \int_B |\nabla \varphi|^q\,dx \bigg)^{\frac{1}{q}}, 
\end{equation}
where $\langle \varphi \rangle_B=\frac{1}{|B|}\int_B \varphi\,dx$.  When $\sigma=1$ this result is well known:  see, for instance,~\cite[Section~7.8, (7.45)]{Gilbarg-Trudinger}.  For $\sigma=\frac{n}{n-q}$, this inequality is sometimes referred to as a Sobolev-Poincar\'e inequality.  It follows from the case when $\sigma=1$ by the Rellich-Kondrachov theorem:  see~\cite[Section~4.1]{MR1014685}.  As with the Sobolev inequality, the cases when $\sigma < \frac{n}{n-q}$ or $q\geq n$ and $\sigma\geq 1$ follow by H\"older's inequality.  Note that in this case the constant $P(q,\sigma)$ also depends on $|\Omega|$.

To more readily apply extrapolation, we rewrite inequality~\eqref{eqn:poincare-ball} as follows:
\begin{equation} \label{eqn:poincare-ball-alt} 
\bigg( \int_B |\varphi-\langle \varphi \rangle_B|^{\sigma q} \,dx \bigg)^{\frac{1}{\sigma q}}
\leq P(q,\sigma) r |B|^{\frac{1}{\sigma q}-\frac{1}{q}}\bigg( \int_B |\nabla \varphi|^q\,dx \bigg)^{\frac{1}{q}}.
\end{equation}

\begin{theorem}\label{local Poin.} 
Given a ball $B=B(x_0,r)$, $x_0 \in \R^n$ and $r>0$, fix $p$, $1\leq p<\infty$. 
\begin{enumerate}
    \item If $1\leq p<n$, choose $q$, $q\leq p<n$, choose $\sigma$, $1\leq \sigma \leq \frac{n}{n-q}$, and choose $\tau$, so that $1\leq \tau\leq\frac{\sigma q}{p}$.
    \item If $n\leq p<\infty$, choose $\tau$, $\tau\ge1$, choose $q$, $n\leq q\leq p$, and choose $\sigma$, $\sigma\ge1$, so that $1\leq \tau\leq \frac{\sigma q}{p}$.
\end{enumerate}
Given weights $v$ and $w$,     if $v\in L^{\frac{\sigma q}{\sigma q-\tau p}}(B)$ and $w^{-1} \in L^{\frac{q}{p-q}}(B)$, and if the matrix $Q$ satisfies the lower ellipticity condition~\eqref{ellptcy for gain}, then for any $u\in QH^{1,p}(v,\overline{B})$,
\begin{multline}\label{Ineq for loc. Poin}
    \bigg(\int_B |u-\langle u \rangle_{B,v}|^{\tau p} \,v dx\bigg)^\frac{1}{\tau p} \\
    \leq 2P(q,\sigma) r|B|^{\frac{1}{\sigma q}-\frac{1}{q}}\|w^{-1}\|^{\frac{1}{p}}_{L^{\frac{q}{p-q}}(B)}
    \|v\|^{\frac{1}{\tau p }}_{L^{\frac{\sigma q}{\sigma q-\tau p}}(B)}
    \bigg(\int_B|\sqrt{Q}\nabla u|^p \, dx\bigg)^{\frac{1}{p}},
\end{multline} 
where $P(q,\sigma)$ is the constant in inequality~\eqref{eqn:poincare-ball-alt}, and $\langle u \rangle_{B,v} = v(B)^{-1}\int_B u\,vdx$.
\end{theorem}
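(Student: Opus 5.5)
The plan is to apply Theorem~\ref{extrapolation} on $\Omega=B$ to the family
\[ \F=\{(|\varphi-\langle\varphi\rangle_B|,\nabla\varphi) : \varphi\in Q\lip(v,\overline{B})\}. \]
We use the base inequality~\eqref{eqn:poincare-ball-alt} with constant $c_0=P(q,\sigma)r|B|^{\frac{1}{\sigma q}-\frac{1}{q}}$. The hypotheses on $p,q,\sigma,\tau$ in both cases (1) and (2) give exactly~\eqref{prop for 4.1}, since $q\le p$ and $\tau\le\sigma q/p$. The base inequality holds for $\varphi\in\lip(\overline{B})$ in the stated ranges of $q,\sigma$, so it holds for every pair in $\F$. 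The convention $\|f\|_X<\infty$ is harmless because $\varphi$ is bounded on $\overline B$ and $v\in L^1(B)$: indeed $\frac{\sigma q}{\sigma q-\tau p}\ge 1$ and $B$ is bounded. Theorem~\ref{extrapolation} then yields
\[ \Big(\int_B|\varphi-\langle\varphi\rangle_B|^{\tau p}v\,dx\Big)^{\frac1{\tau p}}\le c_0\|w^{-1}\|^{\frac1p}\|v\|^{\frac1{\tau p}}\Big(\int_B|\sqrt Q\nabla\varphi|^p\,dx\Big)^{\frac1p}. \]

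The next step is to replace the Lebesgue average by the weighted average, which is where the factor $2$ comes from. For any constant $c$, we have $\|\varphi-\langle\varphi\rangle_{B,v}\|_{L^{\tau p}(v)}\le 2\|\varphi-c\|_{L^{\tau p}(v)}$. To see this, write $\varphi-\langle\varphi\rangle_{B,v}=(\varphi-c)-\langle\varphi-c\rangle_{B,v}$. Then Hölder (Jensen) with respect to the probability measure $v\,dx/v(B)$ gives
\[ |\langle\varphi-c\rangle_{B,v}|\,v(B)^{\frac1{\tau p}}\le\|\varphi-c\|_{L^{\tau p}(v,B)}. \]
Taking $c=\langle\varphi\rangle_B$ and combining with the extrapolated inequality gives~\eqref{Ineq for loc. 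Poin} for all $\varphi\in Q\lip(v,\overline B)$. This requires $v(B)>0$; if $v(B)=0$ the left side vanishes.

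Finally, approximate a general $u\in QH^{1,p}(v,\overline B)$ by $\varphi_k\in Q\lip(v,\overline B)$ with $\varphi_k\to u$ in $L^p(v,B)$ and $\nabla\varphi_k\to\nabla u$ in $QL^p(B)$. The right-hand side converges. For the left-hand side, $\langle\varphi_k\rangle_{B,v}\to\langle u\rangle_{B,v}$ because $L^p(v,B)\subset L^1(v,B)$ when $v(B)<\infty$. Passing to a subsequence with $\varphi_k\to u$ $v$-a.e., Fatou's lemma gives the inequality for $u$. We do not need $\tau p$-convergence, only Fatou.

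The main obstacle, mild here, is this approximation step (Remark~\ref{remark:two-step}). One must make sure Fatou applies on the set where $v>0$, since convergence in $L^p(v)$ only controls $u$ there. The left-hand integral also only sees that set, so the argument goes through.
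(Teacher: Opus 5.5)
Your proposal is correct and follows essentially the same route as the paper: extrapolate the unweighted Poincar\'e inequality~\eqref{eqn:poincare-ball-alt} on $B$ via Theorem~\ref{extrapolation}, replace $\langle u\rangle_B$ by $\langle u\rangle_{B,v}$ at the cost of a factor $2$ using H\"older and the triangle inequality, and finish by approximation, using convergence of the weighted averages together with Fatou's lemma. Your small refinements are consistent with the paper's argument: using $|\varphi-\langle\varphi\rangle_B|$ so that $f\geq 0$, checking $v\in L^1(B)$ to justify finiteness, and noting the case $v(B)=0$.
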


\begin{remark}
    In applications to the regularity of solutions of elliptic PDEs, an important feature of the classical Poincar\'e inequality is the factor $r$ in the constant on the right-hand side: i.e., the constant becomes arbitrarily small as $r$ tends to $0$. We can choose the exponents $q$ and $\sigma$ so that the constant in~\eqref{Ineq for loc. Poin} can also be made small:  see Section~\ref{section:pdes} below.
\end{remark}

\begin{remark}
    In Theorem~\ref{local Poin.} we have  assumed that the balls are Euclidean balls.  However, if we have that the Poincar\'e inequality~\eqref{eqn:poincare-ball-alt} holds for balls defined with respect to any quasi-metric, then we can prove inequality~\eqref{Ineq for loc. Poin} for such balls as well.  
\end{remark}

\subsection*{Poincar\'e Inequalities for $s$-John Domains}

We now give a version of the Poincar\'e inequality  for John domains and their generalization, $s$-John domains.  A bounded domain $\Omega \subset \R^n$ is called a John domain if there is a constant $0<c_J \leq 1$ and a distinguished point $x_0\in \Omega$ (the central point) such that each point $x\in \Omega$ can be joined to $x_0$ by a rectifiable curve $\gamma:[0,l]\rightarrow \Omega$ such that $\gamma(0)=x, \gamma(l)=x_0$, parametrized by arc length,  and for every $t\in[0,l]$,
    \begin{equation} \label{eqn:john-domain}
        \dist(\gamma(t),\partial\Omega)\ge c_Jt.
    \end{equation}
    Examples of John domains include metric balls, Lipschitz domains, star-shaped domains, domains that satisfy the cone condition, the Koch snow flake, and quasidisks. (This condition is equivalent to satisfying the so-called Boman chain condition. For further information, see\cite{Jiang-Kauranen,Diening-Ruzicka-Schumacher,Chuaqui-Osgood-Pommerenke}).  
    %Given a bounded John domain $\Omega$, the following  Poincar\'e inequality holds:  given $u\in \lip(\overline{\Omega})$
%     %
% \begin{equation}\label{Poin. for John domain}
%     \bigg(\int_\Omega |u-u_\Omega|^{\sigma q} \, dx\bigg)^\frac{1}{\sigma q} \leq P(c_J,n,q) \bigg(\int_\Omega \big |\nabla u \big|^q\, dx\bigg)^\frac{1}{q} 
% \end{equation}
% %
% where $1\leq q<n$ and  $1\leq \sigma \leq\frac{n}{n-q}$. (See ~\cite[Theorem 8]{Hajlasz}; again, for $\sigma<\frac{n}{n-q}$ this follows from H\"older's inequality.)
%
More generally, a bounded domain $\Omega \in \R^n$ is called an $s$-John domain, $s\ge1$, if~\eqref{eqn:john-domain} holds with $t^s$ on the right-hand side instead of $t$.  Examples of $s$-John domains include so-called mushroom domains and domains with a $1/s$-H\"older continuous boundary.  (See~\cite{buckley-koskela,Hajlasz-Koskela}.)

Given an $s$-John domain $\Omega$, $s\geq1$, for any $q$, $1\leq q < n_s = (n-1)s+1$, and for any $\sigma$,  $1\leq\sigma< \frac{n}{n_s-q} =\frac{n}{(n-1)s+1-q}$, the following Poincar\'e inequality holds: given $u\in \lip(\overline{\Omega})$,
    \begin{equation}\label{Poin. for s-John domain}
    \bigg(\int_\Omega |u-\langle u\rangle_\Omega|^{\sigma q} \, dx\bigg)^\frac{1}{\sigma q} 
    \leq P(c_J,s,n,q) \bigg(\int_\Omega \big |\nabla u \big|^q\, dx\bigg)^\frac{1}{q}.
    \end{equation}
If $q\geq n_s$, then \eqref{Poin. for s-John domain} holds for any $\sigma \geq 1$.  (See~\cite[Corollary 6, Remark 13]{Hajlasz-Koskela}.)  Note that when $s=1$, $n_s=\frac{n}{n-q}$, so for John domains we get the same gain as in~\eqref{eqn:poincare-ball}.

\begin{theorem}\label{s-John}
    Fix $s\geq 1$ and let $\Omega$ be an $s$-John domain.  Fix $p$, $1<p<\infty$.
    \begin{enumerate}
        \item If $1\leq p<n_s$, choose $q$, $1\leq q\leq p <n_s$, choose $\sigma$,  $1\leq \sigma<\frac{n}{n_s-q}$, and choose $\tau$, so that $1\leq\tau\leq \frac{\sigma q}{p}$,
        \item if $n_s\leq p<\infty$, choose $\tau$, $\tau\ge1$, choose $q$, $n_s\leq q\leq p$, and choose $\sigma$, $\sigma\ge1$, so that $1\leq\tau\leq \frac{\sigma q}{p}$.
    \end{enumerate}
Given weights $v$ and $w$, if $v\in L^{\frac{\sigma q}{\sigma q-\tau p}}(\Omega)\,$, $ \,w^{-1} \in L^{\frac{q}{p-q}}(\Omega)$, and if the matrix $Q$ satisfies the lower ellipticity condition \eqref{ellptcy for gain}, then for any $u\in QH^{1,p}(v,\overline{\Omega})$, 
\begin{multline}\label{Poin. for s-John}
    \bigg(\int_\Omega |u-\langle u\rangle_{\Omega,v}|^{\tau p} \,v dx\bigg)^\frac{1}{\tau p} \\
    \leq 2P(c_J,s,n,q)\|w^{-1}\|^{\frac{1}{p}}_{L^{\frac{q}{p-q}}(\Omega)}\|v\|^{\frac{1}{\tau p}}_{L^{\frac{\sigma q}{\sigma q-\tau p}}(\Omega)}\bigg(\int_\Omega|\sqrt{Q}\nabla u|^p \, dx\bigg)^{\frac{1}{p}},
\end{multline}
where $P(c_J,s,n,q)$ is the constant in~\eqref{Poin. for s-John domain} and $\langle u\rangle_{\Omega,v}=v(\Omega)^{-1}\int_\Omega u\,vdx$.
\end{theorem}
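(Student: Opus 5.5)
Theorem~\ref{s-John} is a direct application of Theorem~\ref{extrapolation} to a suitable extrapolation family, followed by an approximation argument. We take
\[ \F=\{(|\varphi-\langle\varphi\rangle_\Omega|,\nabla\varphi) : \varphi\in Q\lip(v,\overline{\Omega})\}. \]
By the Haj{\l}asz--Koskela inequality~\eqref{Poin. for s-John domain}, this family satisfies~\eqref{Ineq with sigma q in gen.} with $c_0=P(c_J,s,n,q)$, for the stated ranges of $q$ and $\sigma$ in cases (1) and (2). The conditions $1\leq\tau\leq\sigma q/p$ and $q\leq p$ are exactly~\eqref{prop for 4.1}, and since $\tau p\le\sigma q$ we also get $\tau\le\sigma$.

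First we check that $\|f\|_X<\infty$ for pairs in $\F$, as the convention~\eqref{eqn:extrapol-conv} requires. Since $\Omega$ is bounded, $\varphi$ is bounded, so $\int_\Omega|\varphi-\langle\varphi\rangle_\Omega|^{\tau p}v\,dx\le C\,v(\Omega)<\infty$, using $v\in L^{\frac{\sigma q}{\sigma q-\tau p}}(\Omega)\subset L^1(\Omega)$. Theorem~\ref{extrapolation} then gives
\[ \Big(\int_\Omega|\varphi-\langle\varphi\rangle_\Omega|^{\tau p}v\,dx\Big)^{\frac1{\tau p}}\le P\,\|w^{-1}\|^{\frac1p}\|v\|^{\frac1{\tau p}}\Big(\int_\Omega|\sqrt Q\nabla\varphi|^p\,dx\Big)^{\frac1p}. \]

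Next we replace the unweighted mean by the weighted mean $\langle\varphi\rangle_{\Omega,v}$, which accounts for the factor $2$. Since $\langle\varphi\rangle_\Omega$ is a constant,
\[ \langle\varphi\rangle_{\Omega,v}-\langle\varphi\rangle_\Omega = v(\Omega)^{-1}\int_\Omega(\varphi-\langle\varphi\rangle_\Omega)\,v\,dx. \]
By H\"older's inequality with respect to the probability measure $v\,dx/v(\Omega)$,
\[ \big\|\langle\varphi\rangle_{\Omega,v}-\langle\varphi\rangle_\Omega\big\|_{L^{\tau p}(v,\Omega)}\le\|\varphi-\langle\varphi\rangle_\Omega\|_{L^{\tau p}(v,\Omega)}. \]
Minkowski's inequality then yields~\eqref{Poin. for s-John} for all $\varphi\in Q\lip(v,\overline{\Omega})$.

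Finally, we pass to a general $u\in QH^{1,p}(v,\overline{\Omega})$, and this is the main technical step. Take $\varphi_k\in Q\lip(v,\overline\Omega)$ with $\varphi_k\to u$ in $L^p(v,\Omega)$ and $\nabla\varphi_k\to\nabla u$ in $QL^p(\Omega)$. The right-hand side of~\eqref{Poin. for s-John} converges. For the left-hand side, the difficulty is that $\tau p\ge p$, so convergence in $L^p(v,\Omega)$ does not directly control the $L^{\tau p}(v,\Omega)$ norm. We handle this as follows.
\begin{enumerate}
\item Since $v\in L^1(\Omega)$, $L^p(v)$ convergence gives $\langle\varphi_k\rangle_{\Omega,v}\to\langle u\rangle_{\Omega,v}$.
\item Applying the inequality already proved to $\varphi_k-\varphi_j$ shows that $\varphi_k-\langle\varphi_k\rangle_{\Omega,v}$ is Cauchy in $L^{\tau p}(v,\Omega)$, so it has a limit there.
\item This limit must agree $v$-a.e.\ with $u-\langle u\rangle_{\Omega,v}$, since the sequence also converges to that function in $L^p(v,\Omega)$; pass to an a.e.\ convergent subsequence.
\end{enumerate}
Alternatively, Fatou's lemma along an a.e.\ convergent subsequence suffices. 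Either way we obtain~\eqref{Poin. for s-John} for $u$.
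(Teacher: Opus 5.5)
Your proposal is correct and follows essentially the same route as the paper, which proves this theorem exactly as it proves Theorem~\ref{local Poin.}: extrapolate the Haj{\l}asz--Koskela inequality via Theorem~\ref{extrapolation} for the family built from $\varphi-\langle\varphi\rangle_\Omega$, replace the Lebesgue mean by $\langle\varphi\rangle_{\Omega,v}$ at the cost of the factor $2$ using H\"older and the triangle inequality, and then approximate. The only difference is in that last step: your main argument shows the sequence is Cauchy in $L^{\tau p}(v,\Omega)$, whereas the paper uses Fatou's lemma along a $v$-a.e.\ convergent subsequence together with $\langle u_k\rangle_{\Omega,v}\to\langle u\rangle_{\Omega,v}$, which is the alternative you also mention.
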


\subsection*{Folland-Stein inequalities}
We now briefly consider degenerate Sobolev inequalities in other geometries.  To prove these results, we build upon Remark~\ref{remark:measure-space}, where we noted that our extrapolation results hold in any measure space and not just in $\R^n$ equipped with Lebesgue measure.  We first consider the Heisenberg group and then CR manifolds. To avoid making some technical definitions, we will not  state our results on the largest family of functions for which they might hold; this allows us to omit approximation arguments in our proofs.

\medskip

 We begin with some basic definitions.  For further details, we refer the reader to~\cite{MR1232192,Yang,Monticelli-Li}.   The Heisenberg group, $n\ge1$, is the space $\mathbb{H}^n=\R^n\times\R^n\times\R$ with the group action $\circ$ defined by
 \begin{equation*}
     \hat{\xi}\circ\xi:=\bigg(x+\hat{x}, y+\hat{y},t+\hat{t}+2\sum_{\alpha=1}^nx_\alpha\hat{y}_\alpha-y_\alpha\hat{x}_\alpha\bigg)
 \end{equation*}
 for any $\xi=(x,y,t)$, $\hat{\xi}=(\hat{x},\hat{y},\hat{t})$ in $\mathbb{H}^n$,  where $x=(x_1,...,x_n)$,  $y=(y_1,...,y_n)$, $\hat{y}=\hat{y}_1,...,\hat{y}_n$ and $x_i,\,y_i,\,t\in \R$, and $\hat{x},\,\hat{y},\,\hat{t}$ are defined similarly.  The vector fields
    \begin{equation*}
       X_\alpha=\frac{\partial}{\partial x_\alpha}+2y_\alpha\frac{\partial}{\partial t}, \qquad Y_\alpha=\frac{\partial}{\partial y_\alpha}-2x_\alpha\frac{\partial}{\partial t}, \qquad T=\frac{\partial}{\partial t},
   \end{equation*}
$1\leq \alpha \leq n$, form a basis for the Lie-algebra of left invariant vector fields on $\mathbb{H}^n$ with respect to the group action $\circ$.  The horizontal gradient $\nabla_H$ on $\mathbb{H}^n$ is the first order operator
\begin{equation*}
    \nabla_Hu:=(X_1u,..., X_nu,Y_1u,...,Y_nu),
\end{equation*}
where $u\in C^\infty(\mathbb{H}^n)$.  We note that the homogeneous dimension of $\mathbb{H}^n$ is $m=2n+2$, which is different from the topological dimension $2n+1$.

Our starting point for extrapolation is the Folland-Stein Sobolev inequality (see~\cite{Folland-Stein}).  Given any $1<q<m$, let $\sigma=\frac{m}{m-q}$;  then  
\begin{equation}\label{Sob. for Heisenberg group}
    \bigg(\int_{\mathbb{H}^n} |u|^{\sigma q} \, dV\bigg)^\frac{1}{\sigma q} 
    \leq H(n,q) \bigg(\int_{\mathbb{H}^n} \big |\nabla_H u \big|^q\, dV\bigg)^\frac{1}{q}    
   \end{equation}
   holds for any $u\in C^\infty_0(\mathbb{H}^n)$, where $dV$ is the volume form on $\mathbb{H}^n$,
   $dV=dtdx_1...dx_ndy_1...dy_n$.

\begin{theorem}\label{Heisenberg Group} 
On the Heisenberg group $\mathbb{H}^n$, fix any $p$, $1\leq p<m$. Choose $q$, $1\leq q\leq p$, let $\sigma=\frac{m}{m-q}$, and  choose $\tau$, $1\leq\tau\leq\sigma$, so that $p\leq \frac{\sigma q}{\tau}$.  Given weights $v$ and $w$, if $v\in L^{\frac{\sigma q}{\sigma q-\tau p}}(\mathbb{H}^n)$ and $w^{-1} \in L^{\frac{q}{p-q}}(\mathbb{H}^n)$,  and the matrix $Q$ satisfies the lower ellipticity condition~\eqref{ellptcy for gain}, then for any $u\in C^\infty_0(\mathbb{H}^n)$,
\begin{equation*}\label{Sob. for Heis. Group}
    \bigg(\int_{\mathbb{H}^n} |u|^{\tau p} v\, dV\bigg)^\frac{1}{\tau p}
    \leq H(n,q)\|w^{-1}\|^{\frac{1}{p}}_{L^{\frac{q}{p-q}}(\mathbb{H}^n)}
    \|v\|^{\frac{1}{\tau p}}_{L^{\frac{\sigma q}{\sigma q-\tau p}}(\mathbb{H}^n)}
    \bigg(\int_{\mathbb{H}^n}|\sqrt{Q}\nabla_H u|^p \, dV\bigg)^{\frac{1}{p}},
    \end{equation*}
where $H(n,q)$ is the constant in~\eqref{Sob. for Heisenberg group}.
\end{theorem}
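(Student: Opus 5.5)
The plan is to apply Theorem~\ref{extrapolation} on the measure space $(\mathbb{H}^n,dV)$, which Remark~\ref{remark:measure-space} permits. The family of extrapolation pairs is
\[ \F=\{(|u|,\nabla_H u): u\in C^\infty_0(\mathbb{H}^n)\}. \]
The Folland--Stein inequality~\eqref{Sob. for Heisenberg group} with $\sigma=\frac{m}{m-q}$ gives the base inequality~\eqref{Ineq with sigma q in gen.} for this family with $c_0=H(n,q)$. The vector field $\nabla_H u$ takes values in $\R^{2n}$, so $Q$ is understood as a $2n\times 2n$ matrix function. The proof of Theorem~\ref{extrapolation} only uses Hölder's inequality and the pointwise bound $|F|^p\le w^{-1}|\sqrt Q F|^p$, so it is insensitive to the dimension of the target space.

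Next I would verify the exponent hypotheses. We have $1\le\tau\le\sigma$ by assumption, $q\le p$, and $\tau p\le\sigma q$, which together give~\eqref{prop for 4.1}. The integrability hypotheses on $v$ and $w^{-1}$ are exactly those required in Theorem~\ref{extrapolation}, and the lower ellipticity condition~\eqref{ellptcy for gain} is assumed. Conclusion~\eqref{Ineq for gen.} then holds for every $(f,F)\in\F$ with $\|f\|_{L^{\tau p}(v)}<\infty$.

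The one point needing care is the finiteness convention in~\eqref{eqn:extrapol-conv}. For $u\in C^\infty_0$, $|u|^{\tau p}$ is bounded and compactly supported. Hölder's inequality with the exponent $\frac{\sigma q}{\sigma q-\tau p}$ gives $\int |u|^{\tau p}v\,dV\le \|v\|_{L^{\frac{\sigma q}{\sigma q-\tau p}}}\,\||u|^{\tau p}\|_{L^{\sigma q/\tau p}}<\infty$. In the endpoint $\tau p=\sigma q$ we have $v\in L^\infty$, and the bound is immediate. Because the statement is restricted to smooth compactly supported $u$, no approximation argument is needed, in contrast to Remark~\ref{remark:two-step}. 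I expect this to be essentially the only obstacle, and it is routine. Substituting $c_0=H(n,q)$ then yields the stated inequality.
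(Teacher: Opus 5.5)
Your proposal is correct and is essentially the paper's proof. The paper also applies Theorem~\ref{extrapolation} on $(\mathbb{H}^n,dV)$ via Remark~\ref{remark:measure-space}, uses the family built from $C^\infty_0(\mathbb{H}^n)$ with the Folland--Stein inequality~\eqref{Sob. for Heisenberg group} as the base case, and omits the approximation argument; your use of $|u|$, the explicit finiteness check, and the observation that $Q$ acts on $\R^{2n}$ are clarifications of points the paper leaves implicit.
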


\medskip

We now consider Folland-Stein type inequalities on compact Riemannian manifolds and compact CR manifolds.  For brevity, we omit many technical definitions and refer the reader to~\cite{Aubin1976,Folland-Stein,Ho} for complete details.  

 Let $(M,g)$ be a compact Riemannian manifold  of dimension $n>2$ with constant curvature. Given a $C^\infty(M)$ function $u$, let $\nabla_g(u)$ be the Riemannian gradient of $u$. 
 %
 %In local coordinates defined by $\varphi:M\rightarrow \mathbb{R}^n$ with $\hat{x} = u(x)$, $\nabla_g u$ satisfies 
% %
% \[\widehat{ \nabla_g u} (\hat{x}) = G_{x}^{-1}\nabla \hat{u}(\hat{x})\]
% %
% where $G_x$ is the matrix whose entries are $g_{ij} = g(E_i,E_j)$ and the $E_j$'s are the coordinate vector-fields at $x\in M$.
In this setting the following Folland-Stein Sobolev type inequality holds (see\cite[Theorem 11]{Aubin1976}): given any $1\leq q<n$, let $\sigma = \frac{n}{n-q}$.  Then for every $u\in C^\infty(M)$,
\begin{equation} \label{eqn:FS-Rmanifold}
\bigg( \int_M |u|^{\sigma q}\, dV_g\bigg)^{\frac{1}{\sigma q}} 
\leq K(n,q) \bigg( \int_M |\nabla_g u|^{ q}\, dV_g\bigg)^{\frac{1}{ q}} 
+ A(q)\bigg( \int_M |u|^{ q}\, dV_g\bigg)^{\frac{1}{ q}}.
\end{equation}
Here, $dV_g$ is the volume form on $M$.

\begin{theorem}\label{riemannian} 
Let  $(M,g)$ be a compact Riemannian manifold of dimension $n>2$ with constant curvature.  Fix any $p$, $1\leq p <n$.  Choose $q$, $1\leq q \leq p<n$, let $\sigma  = \frac{n}{n-q}$, and choose $\tau$, so that $1\leq \tau \leq \frac{\sigma q}{p}$.    Given weights $v$ and $w$,   if $v\in L^{\frac{\sigma q}{\sigma q-\tau p}}(M)$ and $w^{-1} \in L^{\frac{q}{p-q}}(M)$, and the matrix $Q$ satisfies the lower ellipticity condition~\eqref{ellptcy for gain},  then for every $u\in C^\infty(M)$,
\begin{multline*} \bigg( \int_M |u|^{\sigma q}\, vdV_g\bigg)^{\frac{1}{\sigma q}} 
\leq \|w^{-1}\|^{\frac{1}{p}}_{L^{\frac{q}{p-q}}(M)}
    \|v\|^{\frac{1}{\tau p }}_{L^{\frac{\sigma q}{\sigma q-\tau p}}(M)}  \\
   \times  \bigg[K(n,q) \bigg( \int_M |\sqrt{Q}\nabla_g u|^{ q}\, dV_g\bigg)^{\frac{1}{ q}} 
+ A(q)\bigg( \int_M |u|^{ q}\, wdV_g\bigg)^{\frac{1}{ q}}\bigg],
\end{multline*}
 where $A(q),$ $K(n,q)$ are the constants in \eqref{eqn:FS-Rmanifold}.
\end{theorem}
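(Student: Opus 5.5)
The plan is to deduce this from Theorem~\ref{Main Thm 3}, using Remark~\ref{remark:measure-space} to work on the measure space $(M,dV_g)$ instead of $\R^n$ with Lebesgue measure. Take the family
\[ \F=\{(|u|,\nabla_g u): u\in C^\infty(M)\}. \]
Since $|\nabla_g|u||\leq|\nabla_g u|$ a.e. (or, more simply, since \eqref{eqn:FS-Rmanifold} is insensitive to replacing $u$ by $|u|$ via approximation), the base inequality \eqref{eqn:FS-Rmanifold} is exactly hypothesis \eqref{Ineq with sigma q for MT3}. Here $c_0=K(n,q)$, $c_1=A(q)$ and $\sigma=\frac{n}{n-q}$.

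Because $M$ is compact and $u$ is smooth, $u$ is bounded. Hence every integral $\int_M|u|^{s}v\,dV_g$ is finite as soon as $v\in L^1(M)$, and this follows from $v\in L^{\frac{\sigma q}{\sigma q-\tau p}}(M)$ and the finite volume of $M$. So the finiteness convention attached to \eqref{eqn:extrapol-conv} is satisfied for every pair in $\F$, and no approximation argument is needed.

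Next I would rerun the two H\"older steps in the proof of Theorem~\ref{Main Thm 3}, tracking exponents so that they produce the displayed form.

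\emph{Left-hand side.} Write $\int_M|u|^{\sigma q}v\,dV_g$ and apply H\"older against the unweighted $L^{\sigma q}$ norm. The weight is absorbed into $\|v\|_{L^{\frac{\sigma q}{\sigma q-\tau p}}(M)}$, raised to the power $\frac1{\tau p}$ as in \eqref{Ineq for MT3}.

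\emph{Gradient term.} Use the lower ellipticity condition \eqref{ellptcy for gain} in the pointwise form $|\nabla_g u|\leq w^{-1/p}|\sqrt Q\nabla_g u|$. Then apply H\"older with exponents $\frac pq$ and $\frac p{p-q}$ to the product $w^{-q/p}\cdot|\sqrt Q\nabla_g u|^q$. This produces the factor $\|w^{-1}\|^{1/p}_{L^{\frac q{p-q}}(M)}$.

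\emph{Lower-order term.} Split $|u|^q=|u|^qw^{q/p}\cdot w^{-q/p}$ and apply H\"older with the same exponents. This gives the same factor $\|w^{-1}\|^{1/p}$ in front of the $w$-weighted integral of $u$.

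Factoring the common constant $\|w^{-1}\|^{1/p}\|v\|^{1/\tau p}$ out of both terms gives the bracketed right-hand side.

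The main obstacle is matching exponents. Theorem~\ref{Main Thm 3} naturally outputs the power $\tau p$ on the left and the power $p$ in both right-hand terms. The statement instead displays $\sigma q$ on the left and $q$ on the right. My plan is to reconcile these through the endpoint choice $p=q$, $\tau=\sigma$, which is permitted by \eqref{prop for 4.1}. Under the paper's convention, $L^{a/0}=L^\infty$, so with this choice the two H\"older steps degenerate to the trivial bounds $|\sqrt Q\nabla_g u|^q\geq w\,|\nabla_g u|^q$ and $v\leq\|v\|_\infty$, and the exponents become exactly $\sigma q$ and $q$.

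For general admissible $(p,\tau)$, I would instead state the conclusion with the exponents $\tau p$ and $p$, which is what Theorem~\ref{Main Thm 3} gives. I would then check whether the displayed $\sigma q,q$ version follows under the stated integrability of $v$ and $w^{-1}$. The step I expect to be hardest is precisely this check: whether the $\sigma q$ and $q$ powers can be kept while $v$ and $w^{-1}$ are only in the stated Lebesgue spaces rather than $L^\infty$. As a first attempt I would try interpolating between the endpoint bound and the $(\tau p,p)$ output of Theorem~\ref{Main Thm 3}.
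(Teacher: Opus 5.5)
Your core construction is the paper's proof. You apply Theorem~\ref{Main Thm 3} on $(M,dV_g)$ via Remark~\ref{remark:measure-space}, to pairs built from $u\in C^\infty(M)$ and $\nabla_g u$, with $c_0=K(n,q)$ and $c_1=A(q)$. Finiteness comes from boundedness of $u$ on compact $M$, and there is no approximation step. What this produces is \eqref{Ineq for MT3}: $\tau p$ on the left, $p$-th powers on the right, and lower-order term $\big(\int_M|u|^p\,w\,dV_g\big)^{1/p}$. That is what the paper's proof establishes. The $\sigma q$ and $q$ exponents in the displayed statement are a misprint; compare the CR analogue, Theorem~\ref{Heis. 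Gr. Cor. 2}. Relatedly, your ``left-hand side'' H\"older step only works for $\int_M|u|^{\tau p}v\,dV_g$, with exponents $\frac{\sigma q}{\tau p}$ and $\frac{\sigma q}{\sigma q-\tau p}$. Applied to $\int_M|u|^{\sigma q}v\,dV_g$ against the unweighted $L^{\sigma q}$ norm, it would need $v\in L^\infty(M)$.

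The step that fails is your attempt to keep the displayed exponents. The endpoint $p=q$, $\tau=\sigma$ is not a reduction, because $p,q,\tau$ are fixed by the hypotheses; at that endpoint the displayed inequality simply coincides with \eqref{Ineq for MT3}. Interpolation cannot handle the general case either, because the literal inequality is false there.
\begin{itemize}
\item If $\tau p<\sigma q$, replace $v$ by $\lambda v$. The hypotheses are unchanged, but the left side scales like $\lambda^{1/(\sigma q)}$ and the right side like $\lambda^{1/(\tau p)}$. Letting $\lambda\to 0$ then forces $\int_M|u|^{\sigma q}v\,dV_g=0$ for every $u$.
\item If $q<p$, even with $\tau p=\sigma q$, take $v\equiv 1$ and $\sqrt Q=w^{1/p}I$. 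Let $w=\epsilon$ on a geodesic ball $B_\rho$ whose volume is at most $\epsilon^{q/(p-q)}$, and $w=1$ elsewhere. Let $u$ be a fixed bump rescaled to $B_\rho$. Then $\|w^{-1}\|_{L^{q/(p-q)}(M)}$ stays bounded. The left side is comparable to $\|\nabla_g u\|_{L^q(M)}$ by scale invariance, since $\frac{n}{\sigma q}=\frac nq-1$. The right side is $O\big((\epsilon^{1/p}+\epsilon^{1/q}\rho)\|\nabla_g u\|_{L^q(M)}\big)$, which is negligible relative to the left side as $\epsilon\to 0$.
\end{itemize}
So drop the endpoint and interpolation plan, and state the conclusion in the $(\tau p,p)$ form that Theorem~\ref{Main Thm 3} gives.
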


\medskip

The corresponding theorem holds for CR manifolds.  
% Let $M$ be a real oriented $C^\infty$ manifold of dimension $2n+1$, $(n=1,2,3...)$ and $T^{1,0}$ be a subbundle of the complex tangent bundle $\C TM$. We will call $M$ a CR manifold if it satisfies each of the following.
%    \begin{enumerate}
%        \item $\dim^\C T^{1,0}=n$,
%        \item $T^{1,0}\cap T^{0,1}=\{0\}$,
%        \item $T^{1,0}$ is integrable in the sense of Frobenius,
%        \item $(T^{1,0}\oplus T^{0,1})^\perp \subset \C T^*M$ has a global section.
%    \end{enumerate}
%    See for detailed information ~\cite[Section 2]{Folland-Stein}.
   Given a contact form $\theta$, let $(M,\theta)$ be a compact, strictly pseudoconvex CR manifold of real dimension $l=2n+1$. For $u\in C^\infty(M)$, let $\nabla_b u$ denote the subgradient of $u$.  The following Folland-Stein Sobolev inequality holds (see~\cite{Folland-Stein,Ho}): given any $1<q<l$, let $\sigma=\frac{l}{l-q}$; then for every $u\in C^\infty(M)$,
   \begin{equation}\label{Ineq. Heisen. 2}
       \bigg(\int_M |u|^{\sigma q}\, dV_\theta\bigg)^\frac{1}{\sigma q}
       \leq L(l,q)  \bigg(\int_M |\nabla_bu|^{q}\, dV_\theta\bigg)^\frac{1}{q}
       + B(l,q)  \bigg(\int_M |u|^{q}\, dV_\theta\bigg)^\frac{1}{q}.  
   \end{equation}
Here, $dV_\theta=\theta\wedge (d\theta)^n$ is the volume form on $M$.

\begin{theorem}\label{Heis. Gr. Cor. 2}
Let $(M,\theta)$ be a compact, strictly pseudoconvex CR manifold of real dimension $l=2n+1$.
Fix any $p$, $1\leq p<l$. Choose $q$, $1\leq q\leq p$, let $\sigma=\frac{l}{l-q}$, and choose $\tau$, $1\leq\tau\leq\sigma$, so that $1\leq \tau \leq \frac{\sigma q}{p}$.  Given weights $v$ and $w$, if  $v\in L^{\frac{\sigma q}{\sigma q-\tau p}}(M)$ and  $w^{-1} \in L^{\frac{q}{p-q}}(M)$,  and the matrix  $Q$ satisfies the lower ellipticity condition \eqref{ellptcy for gain}, then for every $u\in C^\infty(M)$,
   \begin{multline*}
  \bigg(\int_M |u|^{\tau p} \,v dV_{\theta}\bigg)^\frac{1}{\tau p}
  \leq \|w^{-1}\|^{\frac{1}{p}}_{L^{\frac{q}{p-q}}(M)}
  \|v\|^{\frac{1}{\tau p}}_{L^{\frac{\sigma q}{\sigma q-\tau p}}(M)} \\
  \times \bigg[L(l,q)\bigg(\int_M |\sqrt{Q}\nabla_b u|^p \, dV_\theta\bigg)^{\frac{1}{p}}
  +B(l,q)\bigg(\int_M |u|^p\,w dV_\theta\bigg)^\frac{1}{p}\bigg], 
\end{multline*}
 where $L(l,q)$ and $B(l,q)$ are the constants in~\eqref{Ineq. Heisen. 2}.
\end{theorem}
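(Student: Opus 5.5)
This follows the pattern of Theorem~\ref{riemannian}: apply Theorem~\ref{Main Thm 3}, via Remark~\ref{remark:measure-space}, on the measure space $(M,dV_\theta)$. Take the extrapolation family
\[ \F=\{(|u|,\nabla_b u): u\in C^\infty(M)\}. \]
Here $\nabla_b u$ is read, in local frames of the horizontal bundle, as a vector-valued function, so that $|\sqrt{Q}\nabla_b u|$ makes sense, exactly as $\nabla_H$ is used in Theorem~\ref{Heisenberg Group}. The Folland--Stein inequality~\eqref{Ineq. Heisen. 2} is then precisely the base inequality~\eqref{Ineq with sigma q for MT3}, with $c_0=L(l,q)$ and $c_1=B(l,q)$. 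These constants are uniform over the family, since they depend only on $M$, $\theta$, $l$ and $q$.

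Next I will verify the hypotheses of Theorem~\ref{Main Thm 3}. The theorem takes $1\le \tau\le \sigma$ and $\tau\le \sigma q/p$. This gives $\tau p/\sigma\le q$, and together with $q\le p$ this is condition~\eqref{prop for 4.1}. The integrability assumptions on $v$ and $w^{-1}$ and the lower ellipticity condition~\eqref{ellptcy for gain} are assumed verbatim. For $p=q$ we use the convention $L^{q/0}=L^\infty$.

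It remains to check the convention in~\eqref{eqn:extrapol-conv}, namely that the left-hand side is finite for every pair in $\F$. Since $M$ is compact and $u$ is smooth, $u$ is bounded. Moreover $v\in L^{\sigma q/(\sigma q-\tau p)}(M)\subset L^1(M)$ because $M$ has finite volume, so $\int_M|u|^{\tau p}v\,dV_\theta<\infty$. Theorem~\ref{Main Thm 3} therefore yields~\eqref{Ineq for MT3}, and with $f=|u|$ and $F=\nabla_b u$ this is exactly the stated inequality. No approximation step is needed, because the result is stated only for $C^\infty(M)$, in line with the remark preceding the Heisenberg group result.

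The only real work is making sure the proof of Theorem~\ref{Main Thm 3} goes through in this setting. It uses only H\"older's inequality on a measure space together with the pointwise bound $w|\xi|^p\le|\sqrt{Q}\xi|^p$. In the CR setting this bound must be applied to the horizontal gradient in a fixed orthonormal frame, which is the step to state with care. With that point settled, the argument is a direct corollary.
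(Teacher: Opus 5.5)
Your proposal is the paper's proof. You apply Theorem~\ref{Main Thm 3} on $(M,dV_\theta)$ via Remark~\ref{remark:measure-space} to the pairs $(|u|,\nabla_b u)$, $u\in C^\infty(M)$, with $c_0=L(l,q)$ and $c_1=B(l,q)$, omit the approximation step, and your checks of \eqref{prop for 4.1} and of the finiteness of $\int_M|u|^{\tau p}v\,dV_\theta$ (using $|M|<\infty$) fill in details the paper leaves implicit.

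One caveat comes from the paper, not from you: the argument is only as good as the quoted base inequality~\eqref{Ineq. Heisen. 2}. As written it does not cover $q=1$, which the statement allows, including when $p=1$. It should also use the homogeneous dimension $l+1=2n+2$ rather than $l$, i.e.\ $\sigma=\frac{l+1}{l+1-q}$. Concentrating a bump under Heisenberg dilations in local coordinates shows that the stated gain $\sigma=\frac{l}{l-q}$ cannot hold.
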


\section{Applications to PDEs}
\label{section:pdes}

In this section we apply the results in Section~\ref{section:Applications} to the study of degenerate elliptic equations.  As we briefly discussed in the Introduction, the authors, with \c{C}etin and Zeren~\cite{SC-DCU-FED-SR-Zeren} proved the existence and uniqueness of weak solutions of  the Dirichlet problem
\begin{equation} \label{eqn:intro-dirichlet}
\begin{cases}
Lu = f + v^{-1}\Div(v \vece h), & x \in \Omega, \\
 u = 0, & x \in \partial \Omega,
 \end{cases}
\end{equation}
where $\Omega\subset \R^n$ is a bounded domain and $L$ is the degenerate, second order elliptic operator with lower order terms,
\begin{equation} \label{eqn:intro-operator}
 Lu = v^{-1} \Div(Q\nabla u) + \vecb \cdot \nabla u + v^{-1}\Div(v\vecc u) + du,
 \end{equation}
and where $Q$ satisfies the ellipticity condition~\eqref{elpt con.} with the weight $v$ in the equation and lower eigenvalue $w$ of $Q$.  To prove their results they assumed that there exists a global Sobolev inequality either with gain in the scale of Lebesgue spaces~\eqref{eqn:gen-sobolev-intro} or in the scale of Orlicz spaces~\eqref{eqn:gen-sobolev-orlicz-intro}, and that there exists a weak local Poincar\'e inequality of the form~\eqref{eqn:weak-poincare-intro}.  Similarly, the first and third authors, with MacDonald, studied the boundedness of solutions of the Dirichlet problem for the equation $Lu = -v^{-1}\Div(\sqrt{Q}\nabla u|^{p-2}Q\nabla u) = |f|^{p-2}f$, $1<p<\infty$.  Again, they assumed the existence of 
either inequality~\eqref{eqn:gen-sobolev-intro} or~\eqref{eqn:gen-sobolev-orlicz-intro} in their theorems (with $2$ replaced by $p$).  

In the context of these results, a natural question to ask is: when do such inequalities exist?  The approach of Fabes, Kenig and Serapioni~\cite{Fabes-Kenig-Serapioni} and Chanillo and Wheeden~\cite{Chanillo-Wheeden} was to prove the existence of Sobolev and Poincar\'e inequalities by assuming strong structural conditions on the weights $v,\,w$: doubling and the Muckenhoupt $A_p$ condition.  But by Theorems~\ref{global sob.} and~\ref{Orlicz case cor}, we can explicitly construct such inequalities by choosing $v$ and $w$ with the appropriate integrability, and the matrix $Q$ so that $v$ and $w$ are its largest and smallest eigenvalues:  for instance, we could take $Q$ to be the diagonal matrix $\diag(w,\lambda_1,...,\lambda_{n-2},v)$, where the functions $\lambda_i$ satisfy $w(x) \leq \lambda_i(x) \leq v(x)$.  

We can also show the existence of the weak Poincar\'e inequality.  The constant in the local Poincar\'e inequality in Theorem~\ref{local Poin.}, when $p=2$ and $\tau=1$,  can be written as
\[ 2P(q,\sigma) r^{1+ \frac{n}{\sigma q}-\frac{n}{q}}\|w^{-1}\|^{\frac{1}{2}}_{L^{\frac{q}{2-q}}(B)}
    \|v\|^{\frac{1}{ 2 }}_{L^{\frac{\sigma q}{\sigma q-2}}(B)}, \]
where $1\leq q\leq 2$ and $1\leq \sigma \leq \frac{n}{n-2}$ satisfy $\sigma q\geq 2$.  If we let $\sigma =\frac{n}{n-2}$, then we can take any $q$ such that $\frac{2n}{n+2} \leq q < 2$.  In this case, we have that the exponent $1+ \frac{n}{\sigma q}-\frac{n}{q}=0$.  If we assume the stronger condition that $v\in L^{\frac{\sigma q}{\sigma q-2\tau }}(\Omega)$ and $w^{-1} \in L^{\frac{q}{2-q}}(\Omega)$ for some $\tau>1$ then Theorem~\ref{global sob.} produces inequality~\eqref{eqn:gen-sobolev-intro} (which is also needed for these results).  The continuity of the integral then gives $\|w^{-1}\|^{\frac{1}{2}}_{L^{\frac{q}{2-q}}(B)}
    \|v\|^{\frac{1}{ 2 }}_{L^{\frac{\sigma q}{\sigma q-2}}(B)}\to 0$ as $r\to 0$ uniformly for all balls $B\subset \Omega$.  If we restrict $1<\sigma <\frac{n}{n-2}$, then we can find $q<2$ such that $\sigma q\geq 2$ and $1+ \frac{n}{\sigma q}-\frac{n}{q}=\nu>0$.  Thus we can get a weak Poincar\'e inequality of the form~\eqref{eqn:alt-poincare-intro} whenever $1<\sigma\leq \frac{n}{n-2}$.   Similar observations hold if we take $p\neq 2$ and if we choose $v$ and $w$ so that the Sobolev inequality with gain in Orlicz spaces~\eqref{eqn:gen-sobolev-orlicz-intro} holds.

\section{Proof of extrapolation theorems}
\label{section:proof-extrapol}

In this section we prove our extrapolation theorems.  We recall our convention that the space $L^{\frac{a}{b}}(\Omega)$, if $b=0$, is interpreted as $L^\infty(\Omega)$, and we make a similar convention for Orlicz space norms.  

\subsection*{Proof of Theorem \ref{extrapolation} }\label{section:Proof of Thm 2.1}
Fix a pair $(f,F)\in \F$.  Then by H\"older's inequality with exponents $\frac{\sigma q}{\tau p}$ and $\frac{\sigma q}{\sigma q- \tau p}$ (if $\sigma q - \tau p>0$, and by the definition of the $L^\infty$ otherwise), and by our assumption~\eqref{Ineq with sigma q in gen.}, 
\begin{multline}\label{eqn:extrapol1}
  \bigg(\int_\Omega |f|^{\tau p} \,v dx\bigg)^\frac{1}{\tau p} 
  \leq \bigg(\|v\|_{L^\frac{\sigma q}{\sigma q-\tau p}(\Omega)}
  \bigg(\int_\Omega |f|^{\sigma q} \,dx\bigg )^{\frac{\tau p}{\sigma q}}\bigg)^\frac{1}{\tau p}\\
  = \|v\|^\frac{1}{\tau p}_{L^\frac{\sigma q}{\sigma q-\tau p}(\Omega)}\bigg(\int_\Omega |f|^{\sigma q}\, \,dx\bigg )^{\frac{1}{\sigma q}} 
  \leq c_0\|v\|^\frac{1}{\tau p}_{L^\frac{\sigma q}{\sigma q-\tau p}(\Omega)}\bigg(\int_\Omega |F|^q\, \,dx\bigg )^{\frac{1}{q}}.
\end{multline} 

If we let $\xi=F(x)$ in \eqref{ellptcy for gain} and rearrange terms, we get that for a.e.~$x\in \Omega$,
\begin{align*}
  |F(x)|^q\leq  w(x)^{-\frac{q}{p}} \big|\sqrt{Q(x)}F(x) \big|^q.
\end{align*}
Hence, if we substitute this into the last term above and apply H\"older's inequality 
 with exponents $\frac{p}{q}$ and $\frac{p}{p-q}$ (again if $p>q$), we get that
\begin{align*}
      \bigg(\int_\Omega |f|^{\tau p} \,v dx\bigg)^\frac{1}{\tau p}
      &\leq c_0\|v\|^\frac{1}{\tau p}_{L^\frac{\sigma q}{\sigma q-\tau p}(\Omega)}\bigg( \int_\Omega w^{-\frac{q}{p}} |\sqrt{Q}F|^q \, dx\bigg)^{\frac{1}{q}} \\
    &\leq c_0 \|v\|^\frac{1}{\tau p}_{L^\frac{\sigma q}{\sigma q-\tau p}(\Omega)}\bigg[\bigg( \int_\Omega w^{\frac{-q}{p-q}} \, dx\bigg)^{\frac{p-q}{p}}  \bigg(\int_\Omega|\sqrt{Q}F|^p \, dx\bigg)^\frac{q}{p}\bigg]^\frac{1}{q}
    \\
    &=c_0\|v\|^\frac{1}{\tau p}_{L^{\frac{\sigma q}{\sigma q-\tau p}}(\Omega)}\|w^{-1}\|^{\frac{1}{p}}_{L^{\frac{q}{p-q}}(\Omega)}\bigg(\int_\Omega|\sqrt{Q}F|^p \, dx\bigg)^\frac{1}{p}.
\end{align*}
This proves inequality~\eqref{Ineq for gen.}.  If $p=q$, this inequality follows from immediately from the definition of $L^\infty$.  This completes the proof.

\subsection*{Proof of Theorem \ref{Main Thm 3}}\label{Weak Ver. Proof}
     The proof of Theorem \ref{Main Thm 3} is nearly identical to the proof of Theorem~\ref{extrapolation} so we will only sketch the changes.
     Again fix $(f,F)\in \F$.  If we argue as for inequality~\eqref{eqn:extrapol1} but use the assumption~\eqref{Ineq with sigma q for MT3}, we get
     \begin{equation*}
    \bigg(\int_\Omega |f|^{\tau p} \,v dx\bigg)^\frac{1}{\tau p}
 \leq \|v\|^\frac{1}{\tau p}_{L^\frac{\sigma q}{\sigma q-\tau p}(\Omega)}
   \bigg[ \bigg(c_0\int_\Omega |F|^q \,dx\bigg)^{\frac{1}{q}} + c_1\bigg(\int_\Omega |f|^q\,dx\bigg)^{\frac{1}{q}}\bigg].
\end{equation*}
The estimate for the first term on the right-hand side is identical to the one in the proof of Theorem~\ref{extrapolation}, so we only consider the estimate for the second term.  However, this is straightforward:  by H\"older's inequality 
 with exponents $\frac{p}{q}$ and $\frac{p}{p-q}$ (again if $p>q$), we get that
 \begin{multline*} \bigg(\int_\Omega |f|^q\,dx\bigg)^{\frac{1}{q}} 
 = \bigg(\int_\Omega |f|^q w^{\frac{q}{p}}w^{-\frac{q}{p}}\,dx\bigg)^{\frac{1}{q}} \\
 \leq \bigg( \int_\Omega w^{\frac{-q}{p-q}} \, dx\bigg)^{\frac{p-q}{p}}\bigg(\int_\Omega |f|^p\,wdx\bigg)^{\frac{1}{p}} 
 = \|w^{-1}\|^{\frac{1}{p}}_{L^{\frac{q}{p-q}}}\bigg(\int_\Omega |f|^p\,wdx\bigg)^{\frac{1}{p}}.
\end{multline*}
Inequality~\eqref{Ineq for MT3} follows.  If $p=q$ the same estimate is immediate.  This completes the proof.

\subsection*{Proof of Theorem \ref{Orlicz case}}\label{Orlicz proof}
    Fix $(f,F)\in \F$.  Define $q^*=\sigma q >p\geq 1$.  Then by the duality inequality~\eqref{eqn:orlicz-dual}, H\"older's inequality with exponents $q^*$ and $(q^*)^{'}=\frac{\sigma q}{\sigma q-1}$, and inequality~\eqref{Ineq with sigma q in gen.}, there exists $g\in L^{\Bar{A}}(v,\Omega)$, $\|g\|_{L^{\Bar{A}}(v,\Omega)}=1$, such that
    \begin{multline} \label{eqn:orlicz1}
        \|f\|_{L^A(v,\Omega)}
        \leq 2 \int_\Omega fgv \,dx\leq 2\bigg(\int_\Omega |f|^{\sigma q}\, \,dx\bigg )^{\frac{1}{\sigma q}}
        \bigg(\int_\Omega |gv|^{\frac{\sigma q}{\sigma q-1}}\, \,dx\bigg )^{\frac{\sigma q-1}{\sigma q}} \\
  \leq 2c_0\bigg(\int_\Omega |F|^{q} \,dx\bigg )^{\frac{1}{ q}}
  \bigg(\int_\Omega |g|^{\frac{\sigma q}{\sigma q-1}} v^{\frac{1}{\sigma q-1}} v \,dx\bigg )^{\frac{\sigma q-1}{\sigma q}}.
\end{multline}

We estimate the first integral in the last line exactly as we did in the proof of Theorem~\ref{extrapolation} to get:
\begin{equation} \label{eqn:orlicz2} \bigg(\int_\Omega |F|^{q} \,dx\bigg )^{\frac{1}{ q}}
\leq \|w^{-1}\|^{\frac{1}{p}}_{L^\infty(\Omega)}\bigg(\int_\Omega|\sqrt{Q}F|^p \, dx\bigg)^{\frac{1}{p}}.
\end{equation}
Therefore, to complete the proof we need to estimate the second integral.  We will apply H\"older's inequality in the scale of Orlicz spaces \eqref{Hölders Ineq for Orlicz} with
\[ B(t)=t^{\big(\frac{p'}{(q^*)'}\big)'}\log(e+t)^{\frac{\tau(p'-1)}{(p'/(q^*)')-1}} 
\qquad \text{and} \qquad 
\Bar{B}(t)\approx \frac{t^{\frac{p'}{(q^*)'}}}{\log(e+t)^{\tau(p'-1)}}; \]
note that since $p<q^*$, $p'>(q^*)'$, and so $B$ and $\Bar{B}$ are well-defined Orlicz functions.  By inequality~\eqref{eqn:conjugate-young}, $\Bar{A}(t)\approx\frac{t^{p'}}{\log(e+t)^{\tau(p'-1)}}$; thus  $\Bar{B}(t^{\frac{\sigma q}{\sigma q-1}})=\Bar{A}(t)$.  Similarly, if we define  $\Phi(t)\approx t^{\frac{p}{\sigma q-p}}\log(e+t)^{\frac{\tau\sigma q}{\sigma q-p}}$, then $B(t^{\frac{1}{\sigma q-1}})=\Phi(t)$.
Hence, by H\"older's inequality and the rescaling property~\eqref{rescaling},  
\begin{multline*}
      \bigg(\int_\Omega |g|^{\frac{\sigma q}{\sigma q-1}} v^{\frac{1}{\sigma q-1}} v \,dx\bigg )^{\frac{\sigma q-1}{\sigma q}}
      \leq 2\|g^{\frac{\sigma q}{\sigma q-1}}\|^{\frac{\sigma q-1}{\sigma q}}_{L^{\Bar{B}}(v,\Omega)}\|v^{\frac{1}{\sigma q-1}}\|^{\frac{\sigma q-1}{\sigma q}}_{L^B(v,\Omega)} \\
      = 2\|g\|_{L^{\Bar{A}}(v,\Omega)}\|v\|^{\frac{1}{\sigma q}}_{L^\Phi(v,\Omega)}
      = 2\|v\|^{\frac{1}{\sigma q}}_{L^\Phi(v,\Omega)}.
\end{multline*}
If we combine this estimate with inequalities~\eqref{eqn:orlicz1} and~\eqref{eqn:orlicz2}, to complete the proof of inequality~\eqref{Sob. Ineq Orlicz} we need to show that 
\begin{equation*}
    \|v\|_{L^\Phi(v,\Omega)}\leq \|v\|_{L^\Psi(\Omega)}(1+\|v\|_{L^\Psi(\Omega)}).
\end{equation*}

To see this, let $\Tilde{v}=v/\|v\|_{L^\Psi(\Omega)}$.
By inequality~\eqref{Orl Norm Rel.} and if we fix $\lambda=1$ in the definition of the Amemiya norm,
\begin{align*}
\|v\|_{L^\Phi(v,\Omega)}&=\|v\|_{L^\Psi(\Omega)}\|\Tilde{v}\|_{L^\Phi(v,\Omega)}\\
&\leq \|v\|_{L^\Psi(\Omega)}\|\Tilde{v}\|^*_{L^\Phi(v,\Omega)} \\
& = \|v\|_{L^\Psi(\Omega)} \inf\bigg\{ \lambda + \lambda \int_\Omega \Phi\bigg(\frac{|f|}{\lambda}\bigg)\,vdx : \lambda>0 \bigg\} \\
&\leq\|v\|_{L^\Psi(\Omega)}\bigg(1+\int_\Omega \Phi(\Tilde{v})\,v dx\bigg);\\
\intertext{since $\Psi(t)=t\Phi(t)$, we have that  }
  &=\|v\|_{L^\Psi(\Omega)}\bigg(1+\|v\|_{L^\Psi(\Omega)}\int_\Omega \Psi(\Tilde{v})\,dx\bigg) \\
  & \leq \|v\|_{L^\Psi(\Omega)}(1+\|v\|_{L^\Psi(\Omega)}),
\end{align*}
where the last inequality follows from~\eqref{eqn:modular} since $\|\Tilde{v}\|_{L^\Psi(\Omega)}=1$.  This completes the proof.

\section{Proof of Applications}\label{section:Proof of App.}
\label{section:proof:appl}

In this section we give the proofs of the applications stated in Section~\ref{section:Applications}.  All of the proofs are variations on the same theme:  as outlined in Remark~\ref{remark:two-step}, we will first construct a family of extrapolation pairs $\F$ such that a classical Sobolev or Poincar\'e holds, and for which the left-hand side of the corresponding degenerate inequality is finite.  Once we do this, the degenerate inequality for pairs in $\F$ follows at once by applying one of our extrapolation theorems. To complete the proof, we give an approximation argument that extends the degenerate inequality to all elements of the desired degenerate Sobolev space. 

\subsection*{Sobolev inequalities}
We begin with the proofs of our degenerate Sobolev inequalities.  We will give the proof of Theorem~\ref{global sob.} in careful detail so as to provide a model for applying extrapolation.  For Theorems~\ref{classical Sob.} and~\ref{Orlicz case cor} we will only sketch the important changes. 

\medskip

\begin{proof}[Proof of Theorem~\ref{global sob.}]
Fix the domain $\Omega$ and the values $p$, $q$, $\sigma$, and $\tau$ as in one of the three cases in the hypotheses of the theorem.  Then in every such case, we have that there exists a constant $S(q,\sigma)$ such that the Sobolev inequality~\eqref{classical Sob.},
 \begin{equation} \label{eqn:class-sobolev2}
        \bigg(\int_\Omega |u|^{\sigma q} \, dx\bigg)^\frac{1}{\sigma q} 
        \leq S(q,\sigma) \bigg(\int_\Omega \big |\nabla u \big|^q\, dx\bigg)^\frac{1}{q},
 \end{equation}
holds for every $u\in\lip_0(\Omega)$. Thus, the initial extrapolation inequality \eqref{Ineq with sigma q in gen.} holds for the  family of extrapolation pairs $\mathcal{F}=\{(u,\nabla u): u\in\lip_0(\Omega)\}$. Fix weights $v$ and $w$, and a matrix $Q$ satisfying the hypotheses.
Given any $u \in \lip_0(\Omega)$, $\supp(u)\subset \Omega$ is compact.  Since $v\in L^{\frac{\sigma q}{\sigma q-\tau p}}(\Omega) \subset L^1(\supp(u))$, we have that $u \in L^{\tau p}(v,\Omega)$.  Therefore, 
by Theorem~\ref{extrapolation},
 \begin{equation} \label{eqn:mid-step}
    \bigg(\int_\Omega |u|^{\tau p} \,v dx\bigg)^\frac{1}{\tau p}
    \leq S(q,\sigma)\|w^{-1}\|^{\frac{1}{p}}_{L^{\frac{q}{p-q}}(\Omega)}\|v\|^{\frac{1}{\tau p }}_{L^{\frac{\sigma q}{\sigma q-\tau p}}(\Omega)}\bigg(\int_\Omega|\sqrt{Q}\nabla u|^p \, dx\bigg)^{\frac{1}{p}}
\end{equation}
holds for any $u\in \lip_0(\Omega)$.

 To complete the proof we use a standard approximation argument.  Fix  $u\in QH^{1,p}_0(v,\Omega)$; then there is a Cauchy sequence $\{u_k\}_{k=1}^\infty$ in $Q\lip_0(v,\Omega)$ such that $\|u_k-u\|_{L^p(v,\Omega)}\rightarrow0$ and $\|\nabla u_k-\nabla u\|_{QL^p(\Omega)}\rightarrow0$ as $k\rightarrow\infty$. By passing to a subsequence we may assume that $u_k\to u$ pointwise $v$-almost everywhere.   Moreover, the pairs $(u_k,\nabla u_k) \in \F$.  Therefore, by  Fatou's lemma and inequality~\eqref{eqn:mid-step},
\begin{align*}
  \bigg( \int_\Omega |u|^{\tau p}\,vdx \bigg)^{\frac{1}{\tau p}}
  &\leq \mathop{\lim }\limits_{k\to \infty}  \bigg( \int_\Omega |u_k|^{\tau p}\,vdx \bigg)^{\frac{1}{\tau p}}\\
  &\leq S(q,\sigma)\|w^{-1}\|^{\frac{1}{p}}_{L^{\frac{q}{p-q}}(\Omega)}
  \|v\|^{\frac{1}{\tau p }}_{L^\frac{\sigma q}{\sigma q-\tau p}(\Omega)}
  \mathop{\lim }\limits_{k\to \infty}
 \bigg(\int_\Omega|\sqrt{Q}\nabla u_k|^p \, dx\bigg)^{\frac{1}{p}}\\
  &= S(q,\sigma)\|w^{-1}\|^{\frac{1}{p}}_{L^{\frac{q}{p-q}}(\Omega)}
  \|v\|^{\frac{1}{\tau p }}_{L^\frac{\sigma q}{\sigma q-\tau p}(\Omega)}
  \bigg(\int_\Omega|\sqrt{Q}\nabla u|^p \, dx\bigg)^{\frac{1}{p}}.
\end{align*}
This completes the proof. 
\end{proof}

\begin{proof}[Proof of Theorem~\ref{classical Sob.}]
  This result is actually a special case of Theorem~\ref{global sob.}, but it is just as easy to prove it directly.  Fix $1<p<\infty$ and let $q=\frac{np}{n+p-1}$. Since $n\ge2$ and $p>1$, $1<q<n$. Therefore, the classical Sobolev inequality~\eqref{eqn:class-sobolev2} holds
   for every $u\in\lip_0(\Omega)$ with  $\sigma=\frac{n}{n-q}$. Thus, extrapolation inequality \eqref{Ineq with sigma q in gen.} holds for 
$\mathcal{F}=\{(u,\nabla u): u\in \lip_0(\Omega)\}$.  A straightforward calculation shows that $\frac{p}{\sigma}=\frac{p(n-1)}{n+p-1}<q<p$, so \eqref{prop for 4.1} holds.  The lower ellipticity condition \eqref{elp.cond.} can be rewritten as
   \begin{equation*}
       \frac{1}{v^{p-1}}|\xi|^{p}\leq |\sqrt{Q}\xi|^p;
   \end{equation*}
 thus,~\eqref{ellptcy for gain} holds with $w=\frac{1}{v^{p-1}}$.   Finally, set $\tau = 1$.

 To apply Theorem~\ref{extrapolation}, we need to show that $v$ and $w$ satisfy the the integrability assumptions.  But another calculation shows that $\frac{q}{p-q}=\frac{n}{p-1}$ and $\frac{\sigma q}{\sigma q-p}=n$, so
 \[ \|w^{-1}\|^{\frac{1}{p}}_{L^{\frac{q}{p-q}}(\Omega)} 
 =\|v^{p-1}\|^{\frac{1}{p}}_{L^{\frac{q}{p-q}}(\Omega)} 
 = \|v\|^{\frac{1}{p'}}_{L^{n}(\Omega)}
 \quad \text{ and } \quad \|v\|^{\frac{1}{p}}_{L^{\frac{\sigma q}{\sigma q-p}}(\Omega)}= \|v\|^{\frac{1}{p}}_{L^{n}}.  
\]
 Therefore, by Theorem~\ref{extrapolation}, inequality~\eqref{deg. Sob.} holds for all $u\in \lip_0(\Omega)$ and the same approximation argument as in the proof of Theorem~\ref{global sob.} shows that it holds for all $u\in QH_0^{1,p}(v,\Omega)$. 
\end{proof}

\begin{proof}[Proof of Theorem~\ref{Orlicz case cor}]
The proof of this result is essentially the same as that of Theorem~\ref{global sob.}, except it uses Theorem~\ref{Orlicz case} instead of Theorem~\ref{extrapolation}.  With the given hypotheses, the classical Sobolev inequality~\eqref{eqn:class-sobolev2} holds for these values of $q$ and $\sigma$, and so with the same family of extrapolation pairs $\F$, Theorem~\ref{Orlicz case} yields inequality~\eqref{Orlicz ineq.} for $u\in \lip_0(\Omega)$.  Then the same approximation argument, this time using Fatou's lemma in the scale of Orlicz spaces (see~\cite[Section 3.2, Proposition 4]{Rao-Ren}) proves it for $u\in QH^{1,p}_0(v,\Omega$). 
\end{proof}

\subsection*{Poincar\'e inequalities}
Next, we prove Theorems~\ref{local Poin.} and~\ref{s-John}. Again, the proofs are very similar to the proof of Theorem~\ref{global sob.} and we focus on the necessary changes.

\begin{proof}[Proof of Theorem~\ref{local Poin.}]
With the given hypotheses we have that the local Poincar\'e inequality~\eqref{eqn:poincare-ball-alt} holds for $u\in \lip(\Bar{B})$.  Therefore, inequality~\eqref{Ineq with sigma q in gen.} holds for the  family of extrapolation pairs $\mathcal{F}=\{(u-\langle u \rangle_B,\nabla u): u\in\lip(\Bar{B})\}$, and so for weights $v$ and $w$ and matrix $Q$ as in the hypotheses, we have that
\begin{equation}\label{second part}
    \|u-\langle u \rangle _{B}\|_{L^{\tau p}(v,B)}
    \leq Cr|B|^{\frac{1}{\sigma q}-\frac{1}{q}}\|w^{-1}\|^{\frac{1}{p}}_{L^{\frac{q}{p-q}}(B)}\|v\|^{\frac{1}{\tau p }}_{L^{\frac{\sigma q}{\sigma q-\tau p}}(B)}\bigg(\int_B|\sqrt{Q}\nabla u|^p \, dx\bigg)^{\frac{1}{p}}.
\end{equation}
holds for all $u\in \lip(\Bar{B})$.  

Moreover, we claim that
\begin{equation}\label{first part}
    \|u-\langle u \rangle _{B,v}\|_{L^{\tau p}(v,B)}\lesssim 2\|u-\langle u \rangle _{B}\|_{L^{\tau p}(v,B)}
\end{equation}
which, combined with~\eqref{second part}, gives~\eqref{Ineq for loc. Poin} for all $u\in \lip(\Bar{B})$. To prove~\eqref{first part}, note that if $\tau p>1$, then by H\"older's inequality,
\begin{multline*}
    |\langle u \rangle_B-\langle u \rangle_{B,v}|
    =\bigg|\frac{1}{v(B)}\int_B(\langle u \rangle_B-u)\,v dx\bigg|
    \leq \frac{1}{v(B)}\int_B |u-\langle u \rangle_B|\,v^{\frac{1}{\tau p}}v^{\frac{\tau p-1}{\tau p}}\,dx \\
    \leq \frac{1}{v(B)}\bigg(\int_B|u-\langle u \rangle_B|^{\tau p}\,v dx\bigg)^\frac{1}{\tau p}
    \bigg(\int_B\,v dx\bigg)^\frac{\tau p-1}{\tau p}
    =v(B)^{\frac{-1}{\tau p}}\|u-\langle u\rangle_B\|_{L^{\tau p}(v,B)}.
\end{multline*}
When $\tau p =1$, this inequality is immediate.  If we take the norm,
\begin{multline*}
    \|\langle u \rangle_B-\langle u \rangle_{B,v}\|_{L^{\tau p}(v,B)}
    \leq\bigg(\int_B\big(v(B)^\frac{-1}{\tau p}\|u-\langle u \rangle _B\|_{L^{\tau p}(v,B)}\big)^{\tau p}\,v dx\bigg)^\frac{1}{\tau p}\\
    =v(B)^\frac{-1}{\tau p}\bigg(\int_B\,v dx\bigg)^\frac{1}{\tau p}\|u-\langle u \rangle_B\|_{L^{\tau p}(v,B)}
    =\|u-\langle u \rangle_B\|_{L^{\tau p}(v,B)}.
\end{multline*}
Given this, \eqref{first part} follows by the triangle inequality.

To complete the proof, we fix $u\in QH^{1,p}(v,B)$ and use the same approximation argument as before.  The only change is that given a Cauchy sequence $\{u_k\}_{k=1}^\infty$ in $\lip(\Bar{B})$ such that $\|u_k-u\|_{L^p(v,B)}\rightarrow 0$ and $\|\nabla u_k-\nabla u\|_{QL^p(B)}\rightarrow 0$ as $k\rightarrow\infty$, we also have that 
\begin{equation*}
    |\langle u_k \rangle_{B,v}-\langle u \rangle_{B,v}|
    \leq \frac{1}{v(B)}\int_B|u_k-u|\,v dx\leq v(B)^{-\frac{1}{p}} \|u_k-u\|_{L^p(v,B)}.
\end{equation*}
Thus, $\langle u_k \rangle_{B,v}\to \langle u \rangle_{B,v}$ as $k\to\infty$, and the approximation argument goes through.
This completes the proof.
\end{proof}

\begin{proof}[Proof of Theorem~\ref{s-John}]
The proof of this result is identical to the proof of Theorem~\ref{local Poin.}, with the only changes being we replace $B$ with $\Omega$,  use the Poincar\'e inequality~\eqref{Poin. for s-John domain} instead of the local Poincar\'e inequality~\eqref{eqn:poincare-ball-alt}, and change the constant that appears accordingly.
\end{proof}

\subsection*{Folland-Stein inequalities}
Finally, we sketch the proofs of Theorems~\ref{Heisenberg Group},~\ref{riemannian} and~\ref{Heis. Gr. Cor. 2}.

\begin{proof}[Proof of Theorem~\ref{Heisenberg Group}]
The proof is identical to the proof of Theorem~\ref{global sob.}, defining $\F =\{ u, \nabla_H u : u \in C^\infty_0(\mathbb{H})\}$, and omitting the approximation argument.  We can apply Theorem~\ref{extrapolation} in this setting by Remark~\ref{remark:measure-space}.
\end{proof}

\begin{proof}[Proof of Theorems~\ref{riemannian} and~\ref{Heis. Gr. Cor. 2}]
The proof of both of these Theorems are essentially the same as the proof of  Theorem~\ref{global sob.}, defining the extrapolation pairs using $\nabla_g u$ or $\nabla_b u$, $u\in C^\infty(M)$, applying Theorem~\ref{Main Thm 3} instead of Theorem~\ref{extrapolation}, and omitting the approximation argument.  We can apply Theorem~\ref{Main Thm 3} in this setting by Remark~\ref{remark:measure-space}.
\end{proof}

\bibliographystyle{plain}
\bibliography{bibliography2-DCU}

\end{document}